\newtheorem{thm}{Theorem} % 定理は \begin{thm} \end{thm}
\newtheorem{lem}[thm]{Lemma} % 補題 \begin{lem} \end{lem}
\newtheorem{prop}[thm]{Proposition}
\theoremstyle{definition}
\newtheorem{defn}[thm]{Definition}
\numberwithin{thm}{section}
\numberwithin{equation}{section}
\newcommand{\C}{\mathbb{C}}
\newcommand{\F}{\mathbb{F}}
\newcommand{\Fbar}{\overline{F}}
\newcommand{\Q}{\mathbb{Q}}
\newcommand{\Kb}{\overline{K}}
\newcommand{\kb}{\overline{k}}
\newcommand{\Fsep}{F^{\text{sep}}}
\newcommand{\rhob}{\overline{\rho}}
\newcommand{\Fab}{F^{\text{ab}}}
\newcommand{\Gab}{\mathrm{G}^{\text{ab}}}
\newcommand{\lambdaab}{\lambda^{\text{ab}}}
\newcommand{\trab}{\mathrm{tr}^{\text{ab}}}
\newcommand{\A}{\mathbb{A}}
\newcommand{\kA}{k_{\A}}
\newcommand{\KA}{K_{\A}}
\newcommand{\R}{\mathbb{R}}
\newcommand{\Z}{\mathbb{Z}}
\newcommand{\betab}{\overline{\beta}}
\newcommand{\disc}{\mathrm{disc}\, }
\newcommand{\mfa}{\mathfrak{a}}
\newcommand{\mfp}{\mathfrak{p}}
\newcommand{\mfq}{\mathfrak{q}}
\newcommand{\mfI}{\mathfrak{I}}
\newcommand{\mfl}{\mathfrak{l}}
\newcommand{\cE}{\mathcal{E}}
\newcommand{\cO}{\mathcal{O}}
\newcommand{\cL}{\mathcal{L}}
\newcommand{\cN}{\mathcal{N}}
\newcommand{\cFR}{\mathcal{FR}}
\newcommand{\Ram}{\mathbf{Ram}}
\newcommand{\cMn}{\mathcal{M}^{\textit{new}}}
\newcommand{\cNn}{\mathcal{N}^{\textit{new}}}
\newcommand{\cSn}{\mathcal{S}^{\textit{new}}}
\newcommand{\cTn}{\mathcal{T}^{\textit{new}}}
\newcommand{\id}{\mathrm{id}}
\newcommand{\Frob}{\mathrm{Frob}}
\newcommand{\Gal}{\mathrm{Gal}}
\newcommand{\G}{\mathrm{G}}
\newcommand{\rmH}{\mathrm{H}}
\newcommand{\M}{\mathrm{M}}
\newcommand{\N}{\mathrm{N}}
\newcommand{\GL}{\mathrm{GL}}
\newcommand{\End}{\mathrm{End}}
\newcommand{\Aut}{\mathrm{Aut}}
\newcommand{\im}{\mathrm{Im}\, }
\newcommand{\Norm}{\mathrm{Norm}}
\newcommand{\ord}{\mathrm{ord}\, }
\newcommand{\tr}{\mathrm{tr}}
\newcommand{\Spec}{\mathrm{Spec}}
\newcommand{\cf}{cf.\ }
\newcommand{\inj}{\hookrightarrow}
\newcommand{\resp}{resp.\ }
\newcommand{\ch}{\mathrm{char}\,}
\begin{document}

\title{An effective bound of $p$ for algebraic points on Shimura curves of $\Gamma_0(p)$-type}
\author{Keisuke Arai}
\date{}
%\address[Keisuke Arai]
%{Department of Mathematics, School of Engineering,
%Tokyo Denki University,
%2-2 Kanda-Nishiki-cho, Chiyoda-ku, Tokyo, Japan 
%101-8457}
%\email{araik@mail.dendai.ac.jp}

%\address[Fumiyuki Momose]
%{Department of Mathematics, Faculty of Science and Engineering,
%Chuo University,
%1-13-27 Kasuga, Bunkyo-ku, Tokyo 112-8551, Japan}
%\email{momose@math.chuo-u.ac.jp}

%\date{}

%\pagestyle{plain}
%
%

%
%

%\subjclass[2010]{Primary 11G18, 14G05; Secondary 11G10, 11G15}

%\keywords{rational points, Shimura curves, QM-abelian surfaces}

\maketitle

%\begin{center}
%\textit{To the memory of Fumiyuki Momose}
%\end{center}

\begin{abstract}

%In this article we study algebraic points on Shimura curves
%of $\Gamma_0(p)$-type.
In previous articles,
we classified the characters associated to algebraic points
on Shimura curves of $\Gamma_0(p)$-type,
and over number fields
in a certain large class
we showed that
there are at most elliptic points on such a Shimura curve
for every sufficiently large prime number $p$.
In this article, we obtain
an effective bound of $p$ concerning % the classification of the characters and 
algebraic points on Shimura curves of $\Gamma_0(p)$-type.

%As an application we show the irreducibility of the mod-$p$ Galois representation
%associated to a QM-abelian surface, together with some finiteness on abelian varieties.

\end{abstract}

%\noindent
%2010 \textit{Mathematics Subject Classification.}
%Primary 11G18, 14G05; Secondary 11G10, 11G15.

%\tableofcontents

\vspace{5mm}
\noindent
{\bf Notation}

\vspace{1mm}

For an integer $n\geq 1$ and a commutative group (or a commutative group scheme) $G$,
let $G[n]$ denote the kernel of multiplication by $n$ in $G$.
For a field $F$,
let $\ch F$ denote the characteristic of $F$,
let $\Fbar$ denote an algebraic closure of $F$,
let $\Fsep$ (\resp $\Fab$) denote the separable closure
(\resp the maximal abelian extension) of $F$ inside $\Fbar$,
and let $\G_F=\Gal(\Fsep/F)$, $\Gab_F=\Gal(\Fab/F)$.
For a prime number $p$ and a field $F$ of characteristic $0$, let
$\theta_p:\G_F\longrightarrow\F_p^{\times}$ denote the mod $p$ cyclotomic character.
Let $|\cdot|$ denote the usual complex absolute value on $\C$.
For a number field or a local field $k$, let $\cO_k$ denote
the ring of integers of $k$.
For a number field $k$,
put $n_k:=[k:\Q]$;
let $d_k$ denote the discriminant of $k$;
let $h_k$ denote the class number of $k$;
let $r_k$ denote the rank of the unit group $\cO_k^{\times}$;
let $R_k$ denote the regulator of $k$;
fix an inclusion $k\hookrightarrow\C$
and take the algebraic closure $\kb$ inside $\C$;
let $k_v$ denote the completion of $k$ at $v$
where $v$ is a place (or a prime) of $k$;
let $k_{\A}$ denote the ad\`{e}le ring of $k$;
and let $\Ram (k)$ denote the set of prime numbers which are ramified in $k$.
For a scheme $S$ and an abelian scheme $A$ over $S$,
let $\End_S(A)$ denote the ring of endomorphisms of $A$ defined over $S$.
If $S=\Spec(F)$ for a field $F$ and if $F'/F$ is a field extension,
simply put
$\End_{F'}(A):=\End_{\Spec(F')}(A\times_{\Spec(F)}\Spec(F'))$
and
$\End(A):=\End_{\Fbar}(A)$.
Let $\Aut(A):=\Aut_{\Fbar}(A)$ be the group
of automorphisms of $A$ defined over $\Fbar$.
For a prime number $p$ and an abelian variety $A$ over a field $F$,
let
$\displaystyle T_pA:=\lim_{\longleftarrow}A[p^n](\Fbar)$
be the $p$-adic Tate module of $A$,
where the inverse limit is taken with respect to
multiplication by $p$ : $A[p^{n+1}](\Fbar)\longrightarrow A[p^n](\Fbar)$.

\section{Introduction}
\label{intro}

Let $B$ be an indefinite quaternion division algebra over $\Q$.
Let
$$d:=\disc B$$
be the discriminant of $B$.
Then $d$ is the product of an even number of distinct prime numbers, and $d>1$.
Fix a maximal order $\cO$ of $B$.
For each prime number $p$ not dividing $d$, fix an isomorphism
\begin{equation}
\label{OM2}
\cO\otimes_{\Z}\Z_p\cong\M_2(\Z_p) %\nonumber
%\leqno(1.1)
\end{equation}
of $\Z_p$-algebras.

\begin{defn}
\label{defqm}
%\rm

(\cf \cite[p.591]{Bu})
Let $S$ be a scheme.
%where $d$ is invertible.
A QM-abelian surface by $\cO$
over $S$ is a pair $(A,i)$ where $A$ is an 
abelian surface over $S$ (i.e. $A$ is an abelian scheme over $S$ 
of relative dimension $2$), and 
$i:\cO\inj\End_S(A)$ 
is an injective ring homomorphism (sending $1$ to $\id$). 
We assume that $A$ has a left $\cO$-action.
We will sometimes omit ``by $\cO$" and simply write ``a QM-abelian surface"
if there is no fear of confusion.

\end{defn}

%\noindent
Let $M^B$ be the coarse moduli scheme over $\Q$ parameterizing isomorphism classes
of QM-abelian surfaces by $\cO$.
The notation $M^B$ is permissible
although we should write $M^{\cO}$ instead of $M^B$;
for even if we replace $\cO$ by another maximal order $\cO'$,
we have a natural isomorphism
$M^{\cO}\cong M^{\cO'}$
since $\cO$ and $\cO'$ are conjugate in $B$.
Then $M^B$ is a proper smooth curve over $\Q$, called a Shimura curve.
For a prime number $p$ not dividing $d$,
let $M_0^B(p)$
% (or simply $M_0(p)$) 
be the coarse moduli scheme over $\Q$ parameterizing isomorphism classes
of triples $(A,i,V)$ where $(A,i)$ is a QM-abelian surface by $\cO$
and $V$ is a left $\cO$-submodule of $A[p]$ with $\F_p$-dimension $2$.
%Here $A[p]$ is the kernel of multiplication by $p$ in $A$.
Then $M_0^B(p)$ is a proper smooth curve over $\Q$, which we call a
Shimura curve of $\Gamma_0(p)$-type.
We have a natural map $$\pi^B(p):M_0^B(p)\longrightarrow M^B$$
over $\Q$ defined by $(A,i,V)\longmapsto (A,i)$.

For real points on $M^B$, we know the following.

\begin{thm}[{\cite[Theorem 0, p.136]{Sh}}]
\label{M^B(R)}

We have $M^B(\R)=\emptyset$.

\end{thm}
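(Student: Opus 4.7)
My plan is to derive a contradiction from the existence of a real point by analyzing the induced action of complex conjugation on the first Betti homology of a QM-abelian surface. Suppose $x \in M^B(\R)$. By standard descent theory applied to the coarse moduli (using the finiteness of the automorphism group of a QM-abelian surface), one can, after an appropriate twist, realize $x$ by a pair $(A, i)$ that is defined over $\R$. Let $\sigma \colon A(\C) \to A(\C)$ be the complex conjugation arising from this $\R$-structure; it is an $\R$-analytic involution fixing the origin and, since $i$ is defined over $\R$, it commutes with $i(b)$ for every $b \in \cO$.

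Consider the induced involution $\sigma_*$ on $H_1(A(\C), \Q)$, a $\Q$-vector space of dimension $4$ carrying an action of $B := \cO \otimes_{\Z} \Q$. The map $\sigma_*$ is $B$-linear and squares to the identity. Since $d > 1$, the algebra $B$ is a quaternion division algebra over $\Q$, so by simplicity and a dimension count ($\dim_\Q B = \dim_\Q H_1(A(\C), \Q) = 4$), the module $H_1(A(\C), \Q)$ is free of rank $1$ over $B$, and $\End_B(H_1(A(\C), \Q))$ identifies with $B^{\mathrm{op}}$ as a $\Q$-algebra. Thus $\sigma_*$ corresponds to an element $\beta \in B$ with $\beta^2 = 1$; in the division algebra $B$, the polynomial $X^2 - 1$ has only the roots $\pm 1$, forcing $\sigma_* = \pm \id$.

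The contradiction comes from comparing two computations of the trace of $\sigma_*$ on $H_1(A(\C), \R) \cong \R^4$. On the one hand, $\pm \id$ has trace $\pm 4$. On the other hand, under the natural identification $H_1(A(\C), \R) \cong \mathrm{Lie}(A(\C))$ as real vector spaces, the $+1$-eigenspace of $\sigma_*$ equals $\mathrm{Lie}(A(\R))$, which has real dimension $2$ since $A$ has dimension $2$ as an abelian variety over $\R$; the $-1$-eigenspace is its complement and also has real dimension $2$. Hence the trace is $0$, contradicting the value $\pm 4$.

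The crucial place where the hypothesis $d > 1$ is used is the rigidity step that forces $\sigma_* = \pm \id$: in a division quaternion algebra the equation $X^2 = 1$ has only the obvious solutions, whereas in the split case ($d = 1$, $B = \M_2(\Q)$) there are many involutions and indeed real points do exist on the classical modular curve. The main technical obstacle is the initial coarse-moduli descent; if $\sigma^2$ on $(A, i)$ is a nontrivial automorphism rather than $\id$, one must separately rule out this twisted possibility, which is handled by an analogous argument showing that a second complex structure on $H_1(A(\C), \R)$ anti-commuting with the natural one would force the image of $B \otimes \R = \M_2(\R)$ to lie inside the commutant of a Hamilton-quaternion subalgebra of $\M_4(\R)$, an impossibility since that commutant is itself a division algebra.
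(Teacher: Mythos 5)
The paper does not give its own proof of this theorem; it cites Shimura's Theorem~0 verbatim, and Shimura's original argument is phrased in the general language of arithmetic quotients of bounded symmetric domains and anti-holomorphic involutions. Your proof is a valid, more concrete proof of this special case via the moduli interpretation, and the central mechanism is sound: once $(A,i)$ is realized over $\R$, the complex-conjugation involution $\sigma_*$ on $H_1(A(\C),\Q)$ is $B$-linear, so it lies in $\End_B(H_1(A(\C),\Q))\cong B^{\mathrm{op}}$, a division algebra (here $d>1$ is essential), forcing $\sigma_*=\pm\id$ with trace $\pm 4$; yet the $\pm1$-eigenspaces of $\sigma_*$ on $H_1(A(\C),\R)\cong\mathrm{Lie}(A(\C))$ are both $2$-dimensional (the $+1$-eigenspace being $\mathrm{Lie}(A/\R)$), giving trace $0$. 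That contradiction is correct and is exactly where the non-split hypothesis on $B$ enters.

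The one place you are vague, and where a careful reader would push back, is the descent step. Saying ``after an appropriate twist'' is misleading: twisting $(A,i)$ would generally change the point of $M^B$. What you actually need is the vanishing of the obstruction to representing the given $\R$-valued point of the coarse moduli space by a QM-abelian surface defined over $\R$, and this is precisely Jordan's result (quoted in the paper as Proposition~\ref{fieldMB}): descent is possible if and only if $B\otimes_{\Q}k\cong\M_2(k)$, which holds for $k=\R$ because $B$ is \emph{indefinite}. So the indefiniteness hypothesis is where the descent is bought, and once you invoke it the last paragraph of your write-up becomes unnecessary. As stated, that final paragraph is also somewhat garbled: if $\phi\phi^{\sigma}=-1$ you do get an anti-holomorphic $\psi$ with $\psi_*^2=-1$ anticommuting with the natural complex structure $J$, and the subalgebra $\langle J,\psi_*\rangle\cong\mathbb{H}\subset\M_4(\R)$ has centralizer isomorphic to $\mathbb{H}$ (by double centralizer), into which $B\otimes\R\cong\M_2(\R)$ cannot embed. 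That argument can be made to work, but it is redundant given Jordan's theorem, and in the version you wrote it would need the commutation of the $\cO$-action with $\psi_*$ to be spelled out. I would recommend replacing both the vague ``twist'' sentence and the final paragraph with a single invocation of Proposition~\ref{fieldMB} together with the observation $B\otimes_{\Q}\R\cong\M_2(\R)$.
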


In the previous article, we showed that there are few points on $M_0^B(p)$
for every sufficiently large prime number $p$,
which is an analogue of the study of points on the modular curve $X_0(p)$
(\cite{Ma}, \cite{Mo};
for related topics, see \cite{A2}).

\begin{thm}[{\cite[Theorem 1.4]{A3}}]
\label{prevthm}

Let $k$ be a finite Galois extension of $\Q$ which does not contain
the Hilbert class field of any imaginary quadratic field.
Then there is a finite set $\cL(k)$
of prime numbers
depending on $k$
which satisfies the following.
Assume that there is a prime number $q$ which splits completely in $k$ and satisfies
$B\otimes_{\Q}\Q(\sqrt{-q})\not\cong\M_2(\Q(\sqrt{-q}))$, and
let $p>4q$ be a prime number which also satisfies
$p\geq 11$, $p\ne 13$, $p\nmid d$ and $p\not\in\cL(k)$.

(1)
If $B\otimes_{\Q}k\cong\M_2(k)$, then $M_0^B(p)(k)=\emptyset$.

(2)
If $B\otimes_{\Q}k\not\cong\M_2(k)$, then
$M_0^B(p)(k)\subseteq\{\text{elliptic points of order $2$ or $3$}\}$.

\end{thm}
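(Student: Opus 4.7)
The proof proceeds by contradiction: suppose there is a $k$-rational point $P$ on $M_0^B(p)$ which, in Case~(2), is assumed not to be an elliptic point of order $2$ or $3$. Represent $P$ by a triple $(A,i,V)$ over $k$, where $V\subset A[p]$ is a left $\cO$-stable $\F_p$-subspace of dimension $2$. Since $p\nmid d$, the isomorphism $\cO\otimes_{\Z}\Z_p\cong\M_2(\Z_p)$ realizes $A[p]$ as a free $\M_2(\F_p)$-module of rank $1$, inside which $V$ is a copy of the simple module $\F_p^2$. Because $\End_{\M_2(\F_p)}(\F_p^2)=\F_p$, the $\G_k$-action on $V$ is given by a character $\lambda:\G_k\to\F_p^\times$; the $\cO$-equivariant Weil pairing identifies $A[p]/V$ with a twist of the Cartier dual of $V$, so the corresponding character $\lambda'$ satisfies $\lambda\lambda'=\theta_p$. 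The task reduces to ruling out such a $\lambda$.

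The first main ingredient is the classification of isogeny characters proved in the author's previous articles: after raising to a fixed power (the $12$th, as in Mazur), one has $\lambda^{12}=\theta_p^a\chi$ for some integer $a$ and a finite-order character $\chi:\G_k\to\Fb_p^\times$ whose kernel cuts out an abelian extension of an imaginary quadratic subfield $k'\subset\Qb$. The hypothesis that $k$ contains no Hilbert class field of an imaginary quadratic field collapses the set of possible $k'$ (and hence of $\chi$) to a finite list depending only on $k$; the primes $p$ for which some $\chi$ in this list yields a viable $\lambda$, together with the small primes where Mazur-type Eisenstein arguments fail (absorbed by the hypotheses $p\geq 11$, $p\neq 13$, $p\nmid d$), comprise the finite set $\cL(k)$.

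The second ingredient is a Frobenius argument at a prime $\mathfrak{q}$ of $k$ above the auxiliary prime $q$. Since $q$ splits completely in $k$ and $B\otimes_{\Q}\Q(\sqrt{-q})$ is a division algebra, $A$ has good reduction at $\mathfrak{q}$ and cannot acquire CM by $\Q(\sqrt{-q})$; its Frobenius then has characteristic polynomial $T^2-tT+q$ with $|t|\leq 2\sqrt{q}$, and reading off the action on the subspace $V\subset A[p]$ gives
\begin{equation*}
\lambda(\Frob_\mathfrak{q})+q\,\lambda(\Frob_\mathfrak{q})^{-1}\equiv t\pmod{p}.
\end{equation*}
The classification forces $\lambda(\Frob_\mathfrak{q})$ to be a root of unity of bounded order, so the left-hand side lifts to an integer of absolute value at most $q+1$. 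The inequality $p>4q$ then upgrades the mod-$p$ congruence to an equality in $\Z$, which either violates the Weil bound $|t|\leq 2\sqrt{q}$ or forces $A$ to have CM by $\Q(\sqrt{-q})$ --- contradicting the splitting assumption on $B$. Case~(1) is treated inside the same framework, the extra splitting $B\otimes k\cong\M_2(k)$ being additional structure that only tightens the obstruction via a Morita translation to the elliptic-curve setting.

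The main obstacle is the character classification: controlling $\chi$ precisely enough to isolate $\cL(k)$ requires a careful place-by-place analysis of the local isogeny characters --- at primes dividing $d$ where $B$ ramifies, at primes above $p$ where $p$-adic Hodge theory distinguishes ordinary, multiplicative, and supersingular reduction, and at the remaining finite primes of good reduction --- followed by a global patching argument to constrain the conductor and image of $\chi$. This analysis is the substantive content of the author's earlier works invoked in Step~1; the present paper's contribution is to make the resulting bound on $p$ explicit.
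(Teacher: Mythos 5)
Your proposal has the right skeleton (character classification + Frobenius at an auxiliary prime $\mathfrak{q}\mid q$ + the inequality $p>4q$ to promote a congruence to an equality and then contradict the Weil bound/the non-splitting of $B$), but it elides the central technical difficulty that the paper is built around: a $k$-rational point on a \emph{coarse} moduli space generally does not admit a representing object over $k$. Your Step~2 (``Represent $P$ by a triple $(A,i,V)$ over $k$'') is false in exactly the cases the theorem is about. In Case~(2), $B\otimes_{\Q}k\not\cong\M_2(k)$, and by Jordan's criterion (Proposition~\ref{fieldMB}) even the pair $(A_x,i_x)$ cannot be defined over $k$; in Case~(1), when $\Aut(x)=\{\pm1\}$ and $\Aut(x')\cong\Z/4\Z$ the triple still need not descend (Proposition~\ref{fieldM0Bp}). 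Consequently the character $\lambda$ lives on $\G_K$ for a suitable quadratic extension $K/k$ chosen by Lemma~\ref{fieldofdef}, and one must pass to the transferred character $\varphi=\lambdaab\circ\tr_{K/k}:\G_k\to\F_p^\times$ before any global classification or Frobenius argument can be run over $k$. Your Frobenius computation as written would have to take place over $K$, where the auxiliary prime $q$ no longer necessarily splits completely, and the conditions on $K$ (primes above $p$ inert in $K/k$, primes in $\cSn$ ramified in $K/k$) are what make the type-2 argument close; none of this is visible in your sketch.

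A second, smaller gap: you assert good reduction of $A$ at $\mathfrak{q}$, but what is available is only \emph{potentially} good reduction after a totally ramified extension of $K_{\mathfrak{q}_K}$; the Frobenius characteristic polynomial and the exponent $12$ (or $24h_k$) enter precisely to absorb the ambiguity introduced by this extension. Finally, your description of Case~(1) as merely ``tightening the obstruction via Morita'' misstates the paper's argument: when the triple \emph{does} descend to $k$, the reducibility of $\rhob_{A,p}$ over $\G_k$ is ruled out directly by Theorem~\ref{irred}, and when it does not, one is thrown back into the quadratic-extension/transfer machinery as in Case~(2). The classification into Type~2 (essentially $\varphi^{12}=\theta_p^{12}$, $p\equiv3\bmod 4$) versus Type~3 (CM, requiring a Hilbert class field inside $k$), with Type~3 killed by the hypothesis on $k$ and Type~2 killed by Lemma~\ref{q/p-1}, is the correct shape of the argument and you have that part right; but without the field-of-definition/transfer step the proof does not reach the cases it needs to cover.
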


We can identify $M_0^B(p)(\C)$ with a quotient of the upper half-plane,
and we use the notion of "elliptic points" in this context.
In Theorem \ref{prevthm}, the set $\cL(k)$ can be described explicitly.
In this article, we modify the set $\cL(k)$ slightly, and obtain
its effective bound in Section \ref{est}.

\section{Galois representations associated to QM-abelian surfaces (generalities)}
\label{QM}

We consider the Galois representation associated to a QM-abelian surface.
Take a prime number $p$ not dividing $d$.
Let $F$ be a field with $\ch F\ne p$.
Let $(A,i)$ be a QM-abelian surface by $\cO$ over $F$.
We have isomorphisms of $\Z_p$-modules:
$$\Z_p^4\cong T_pA\cong\cO\otimes_{\Z}\Z_p\cong\M_2(\Z_p).$$
The middle is also an isomorphism of left $\cO$-modules;
the last is also an isomorphism of $\Z_p$-algebras (which is fixed in (\ref{OM2})).
We sometimes identify these $\Z_p$-modules.
Take a $\Z_p$-basis
$$e_1=\left(\begin{matrix} 1&0 \\ 0&0\end{matrix}\right),\ 
e_2=\left(\begin{matrix} 0&0 \\ 1&0\end{matrix}\right),\ 
e_3=\left(\begin{matrix} 0&1 \\ 0&0\end{matrix}\right),\ 
e_4=\left(\begin{matrix} 0&0 \\ 0&1\end{matrix}\right)$$
of $\M_2(\Z_p)$.
Then the image of the natural map
$$\M_2(\Z_p)\cong\cO\otimes_{\Z}\Z_p\hookrightarrow\End(T_pA)\cong\M_4(\Z_p)$$
lies in
$\left\{\left(\begin{matrix} 
X\ &0 \\ 0\ &X
\end{matrix}\right)
\Bigg|\, X\in\M_2(\Z_p)\right\}$.
%
%Let $\Fsep$ be a separable closure of $F$, and
%let $\G_F=\Gal(\Fsep/F)$ be the absolute Galois group of $F$.
The action of the Galois group $\G_F$ on $T_pA$ induces a representation
$$\rho:\G_F\longrightarrow\Aut_{\cO\otimes_{\Z}\Z_p}(T_pA)\subseteq\Aut(T_pA)
\cong\GL_4(\Z_p),$$
where
$\Aut_{\cO\otimes_{\Z}\Z_p}(T_pA)$
is the group of automorphisms of $T_pA$
commuting with the action of $\cO\otimes_{\Z}\Z_p$.
We often identify
$\Aut(T_pA)=\GL_4(\Z_p)$.
The above observation implies
$$\Aut_{\cO\otimes_{\Z}\Z_p}(T_pA)=
\left\{\left(\begin{matrix} sI_2&tI_2 \\ uI_2&vI_2\end{matrix}\right) \Bigg|\,
\left(\begin{matrix} s&t \\ u&v\end{matrix}\right)\in\GL_2(\Z_p)\right\},$$
where 
$I_2=\left(\begin{matrix} 1\ &0 \\ 0\ &1\end{matrix}\right)$.
Then the Galois representation $\rho$ factors as
$$\rho:\G_F\longrightarrow
\left\{\left(\begin{matrix} sI_2&tI_2 \\ uI_2&vI_2\end{matrix}\right) \Bigg|\,
\left(\begin{matrix} s&t \\ u&v\end{matrix}\right)\in\GL_2(\Z_p)\right\}
\subseteq\GL_4(\Z_p).$$
Let 
$$\rhob:\G_F\longrightarrow
\left\{\left(\begin{matrix} sI_2&tI_2 \\ uI_2&vI_2\end{matrix}\right) \Bigg|\,
\left(\begin{matrix} s&t \\ u&v\end{matrix}\right)\in\GL_2(\F_p)\right\}
\subseteq\GL_4(\F_p)$$
be the reduction of $\rho$ modulo $p$.
Let
\begin{equation}
\label{rhobar}
\rhob_{A,p}:\G_F\longrightarrow\GL_2(\F_p) %\nonumber
%\leqno(2.1)
\end{equation}
denote the Galois representation induced from $\rhob$ by
$``\left(\begin{matrix} s&t \\ u&v\end{matrix}\right)"$,
so that we have
$\rhob_{A,p}(\sigma)=\left(\begin{matrix} s&t \\ u&v\end{matrix}\right)$
if $\rhob(\sigma)=\left(\begin{matrix} sI_2&tI_2 \\ uI_2&vI_2\end{matrix}\right)$
for $\sigma\in\G_F$.

%

%For a separable closure $\Fsep$ of $F$,
Suppose that $A[p](\Fsep)$ has a left $\cO$-submodule $V$ with $\F_p$-dimension $2$
which is stable under the action of $\G_F$.
We may assume
$V=\F_p e_1\oplus\F_p e_2
=\left\{\left(\begin{matrix} *&0 \\ *&0 \end{matrix}\right)\right\}$.
Since $V$ is stable under the action of $\G_F$, we find
$\rhob_{A,p}(\G_F)\subseteq
\left\{\left(\begin{matrix} 
s&t \\ 0&v
\end{matrix}\right)\right\}\subseteq\GL_2(\F_p)$.
%
%We also find that $\G_F$ acts on $V$ as
%``multiplication by $x$";
Let
\begin{equation}
\label{lambda}
\lambda:\G_F\longrightarrow\F_p^{\times} %\nonumber
%\leqno(2.2)
\end{equation}
denote the character induced from $\rhob_{A,p}$ by ``$s$", so that
$\rhob_{A,p}(\sigma)=
\left(\begin{matrix} 
\lambda(\sigma)&* \\ 0&*
\end{matrix}\right)$
for $\sigma\in\G_F$.
Note that
$\G_F$ acts on $V$ by $\lambda$
(i.e. $\rhob(\sigma)(v)=\lambda(\sigma)v$
for $\sigma\in\G_F$, $v\in V$).

\section{Automorphism groups}
\label{Aut}

We consider the automorphism group of a QM-abelian surface.
Let $(A,i)$ be a QM-abelian surface by $\cO$ over a field $F$.
Put
$$\End_{\cO}(A):=\{f\in\End(A)\mid fi(g)=i(g)f \text{\ \ for any $g\in\cO$}\}$$
and
$$\Aut_{\cO}(A):=\Aut(A)\cap\End_{\cO}(A).$$
If $\ch F=0$,
then $\Aut_{\cO}(A)\cong\Z/2\Z$, $\Z/4\Z$ or $\Z/6\Z$.
%The isomorphism $\Aut_{\cO}(A)\cong\Z/4\Z$ (\resp $\Aut_{\cO}(A)\cong\Z/6\Z$)
%is possible only when all the prime divisors of $d$ are congruent to $2$ or $-1$ modulo $4$
%(\resp $0$ or $-1$ modulo $3$).

Let $p$ be a prime number not dividing $d$.
Let $(A,i,V)$ be a triple where $(A,i)$ is a QM-abelian surface by $\cO$ over
a field $F$
and $V$ is a left $\cO$-submodule of $A[p](\Fbar)$ with $\F_p$-dimension $2$.
Define a subgroup $\Aut_{\cO}(A,V)$ of $\Aut_{\cO}(A)$ by
$$\Aut_{\cO}(A,V):=\{f\in\Aut_{\cO}(A)\mid f(V)=V\}.$$
Assume $\ch F=0$.
Then $\Aut_{\cO}(A,V)\cong\Z/2\Z$, $\Z/4\Z$ or $\Z/6\Z$.
Notice that we have
$\Aut_{\cO}(A)\cong\Z/2\Z$
(\resp $\Aut_{\cO}(A,V)\cong\Z/2\Z$)
if and only if
$\Aut_{\cO}(A)=\{\pm 1\}$
(\resp $\Aut_{\cO}(A,V)=\{\pm 1\}$).

\section{Fields of definition}
\label{fieldofdefinition}

Let $k$ be a number field. %(considered as a subfield of $\C$).
Let $p$ be a prime number not dividing $d$.
Take a point
$$x\in M_0^B(p)(k).$$
Let $x'\in M^B(k)$ be the image
of $x$ by the map $\pi^B(p):M_0^B(p)\longrightarrow M^B$.
Then $x'$ is represented by a QM-abelian surface (say $(A_x,i_x)$) over $\kb$,
and $x$ is represented by a triple $(A_x,i_x,V_x)$
where $V_x$ is a left $\cO$-submodule of $A[p](\kb)$
with $\F_p$-dimension $2$.
For a finite extension $M$ of $k$ (in $\kb$),
we say that we can take $(A_x,i_x)$ (\resp $(A_x,i_x,V_x)$) to be defined
over $M$
if there is a QM-abelian surface $(A,i)$ over $M$ such that
$(A,i)\otimes_M\kb$ is isomorphic to $(A_x,i_x)$
(\resp if there is a QM-abelian surface $(A,i)$ over $M$
and a left $\cO$-submodule $V$ of $A[p](\kb)$ with $\F_p$-dimension $2$
stable under the action of $\G_M$ such that there is an isomorphism
between $(A,i)\otimes_M\kb$ and $(A_x,i_x)$ under which $V$ corresponds
to $V_x$).
Put
$$\Aut(x):=\Aut_{\cO}(A_x,V_x),\ \ \ \ \Aut(x'):=\Aut_{\cO}(A_x).$$
Then $\Aut(x)$ is a subgroup of $\Aut(x')$.
%
%The point $x$ is called a CM point if $A_x$ has complex multiplication.
%Note that if $\Aut(x')\cong\Z/4\Z$ or $\Z/6\Z$,
%then $x$ is a CM point.
%
%In particular, if $x$ is an elliptic point of order $2$ or $3$,
%then $x$ is a CM point.
%
Note that $x$ is an elliptic point of order $2$ (\resp $3$)
if and only if
$\Aut(x)\cong\Z/4\Z$
(\resp $\Aut(x)\cong\Z/6\Z$).

Since $x$ is a $k$-rational point, we have 
$^{\sigma}x=x$ for any $\sigma\in\G_k$.
Then, for any $\sigma\in\G_k$, there is an isomorphism
$$\phi_{\sigma}:{}^{\sigma}(A_x,i_x,V_x)\longrightarrow (A_x,i_x,V_x),$$
which we fix once for all.
Let
$$\phi'_{\sigma}:{}^{\sigma}(A_x,i_x)\longrightarrow (A_x,i_x)$$
be the isomorphism induced from $\phi_{\sigma}$ by forgetting $V_x$.
For $\sigma,\tau\in\G_k$, put
$$c_x(\sigma,\tau)
:=\phi_{\sigma}\circ {}^{\sigma}\phi_{\tau}
\circ\phi_{\sigma\tau}^{-1}\in\Aut(x)$$
and
$$c'_x(\sigma,\tau)
:=\phi'_{\sigma}\circ {}^{\sigma}\phi'_{\tau}
\circ(\phi'_{\sigma\tau})^{-1}\in\Aut(x').$$
Then $c_x$ (\resp $c'_x$) is a $2$-cocycle
and defines a cohomology class
$[c_x]\in H^2(\G_k,\Aut(x))$
(\resp $[c'_x]\in H^2(\G_k,\Aut(x'))$.
Here the action of $\G_k$ on $\Aut(x)$ (\resp $\Aut(x')$)
is defined in a natural manner (\cf \cite[Section 4]{AM}).

\begin{prop}[{\cite[Theorem (1.1), p.93]{J}}]
\label{fieldMB}

We can take $(A_x,i_x)$ to be defined over $k$
if and only if
$B\otimes_{\Q}k\cong\M_2(k)$.

\end{prop}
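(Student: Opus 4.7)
The plan is to prove the two implications separately. The necessity is a direct tangent-space argument, while the sufficiency reduces to identifying the cohomological obstruction to descent with the Brauer class of $B\otimes_\Q k$.

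For the necessity, suppose $(A_x,i_x)$ descends to a QM-abelian surface $(A,i)$ defined over $k$. Differentiating $i:\cO\inj\End_k(A)$ at the origin yields a $\Z$-linear action of $\cO$ on the tangent space $\mathrm{Lie}(A)$. Since morphisms defined over $k$ induce $k$-linear maps on tangent spaces, this action commutes with the $k$-action on $\mathrm{Lie}(A)$, so it extends to a $k$-algebra homomorphism
$$\varphi\colon B\otimes_\Q k\longrightarrow\End_k(\mathrm{Lie}(A))\cong\M_2(k),$$
where the right isomorphism uses $\dim A=2$. Because $B\otimes_\Q k$ is a central simple $k$-algebra of dimension $4$ and $\M_2(k)$ is also $4$-dimensional, $\varphi$ is injective and hence an isomorphism; therefore $B\otimes_\Q k\cong\M_2(k)$.

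For the sufficiency, the $k$-rationality of $x'$ provides isomorphisms $\phi'_\sigma:{}^\sigma(A_x,i_x)\to(A_x,i_x)$ for $\sigma\in\G_k$, and the associated $2$-cocycle $c'_x$ defines a class $[c'_x]\in H^2(\G_k,\Aut(x'))$. Standard Galois descent says that $(A_x,i_x)$ descends to $k$ if and only if $[c'_x]=0$. Generically $\Aut(x')=\{\pm 1\}$, so $[c'_x]$ lives in $H^2(\G_k,\mu_2)=\Br(k)[2]$; the elliptic-point cases reduce to this after pushing out along the quotient of $\Aut(x')$ by $\{\pm 1\}$. The crucial computation, due to Jordan, shows that the image of $[c'_x]$ in $\Br(k)$ is precisely the Brauer class $[B\otimes_\Q k]$. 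Under the assumption $B\otimes_\Q k\cong\M_2(k)$, this Brauer class vanishes, so $[c'_x]=0$ and $(A_x,i_x)$ descends.

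The main obstacle is the identification $[c'_x]=[B\otimes_\Q k]$ in $\Br(k)$ needed for the sufficiency. This is not formal: it requires an explicit comparison, worked out through the complex uniformization of $M^B$, between the cocycle arising from the Galois conjugates of the QM-abelian surface and a cocycle representing the quaternion algebra $B\otimes_\Q k$. This calculation is the content of \cite[Theorem (1.1)]{J}, which I will cite directly rather than reprove.
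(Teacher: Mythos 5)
Your sketch is correct and follows the same route as the paper, which gives no proof and simply defers to Jordan's Theorem (1.1). The necessity argument via the $\cO$-action on $\mathrm{Lie}(A)$ is exactly right: the map $B\otimes_{\Q}k\to\End_k(\mathrm{Lie}(A))\cong\M_2(k)$ is nonzero between $4$-dimensional $k$-algebras and $B\otimes_{\Q}k$ is simple, so it is an isomorphism; and the sufficiency via identifying the descent obstruction $[c'_x]$ with the Brauer class $[B\otimes_{\Q}k]$ is precisely Jordan's computation, which you correctly cite rather than reprove.

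One small imprecision worth flagging: when $\Aut(x')\cong\Z/4\Z$ or $\Z/6\Z$, ``pushing out along the quotient of $\Aut(x')$ by $\{\pm 1\}$'' lands $[c'_x]$ in $H^2(\G_k,\Aut(x')/\{\pm1\})$, which is not a subgroup of $\Br(k)$ and whose vanishing only shows that $[c'_x]$ lifts from $H^2(\G_k,\{\pm1\})$, not that it vanishes. The clean version instead identifies $\Aut(x')$ with $\mu_4$ or $\mu_6\subseteq\kb^{\times}$ (via the CM order $\End_{\cO}(A_x)$) and pushes $H^2(\G_k,\Aut(x'))\to H^2(\G_k,\kb^{\times})=\Br(k)$ directly, where the Kummer sequence makes that map injective. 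This is what Jordan does; since you cite him for the whole statement this is a phrasing slip rather than a real gap.
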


\begin{prop}[{\cite[Proposition 4.2]{AM}}]
\label{fieldM0Bp}

(1)
Suppose $B\otimes_{\Q}k\cong\M_2(k)$.
Further assume $\Aut(x)\ne\{\pm 1\}$ or
$\Aut(x')\not\cong\Z/4\Z$.
Then we can take $(A_x,i_x,V_x)$ to be defined over $k$.

(2)
Assume $\Aut(x)=\{\pm 1\}$.
Then there is a quadratic extension $K$ of $k$
such that we can take $(A_x,i_x,V_x)$ to be defined over $K$.

\end{prop}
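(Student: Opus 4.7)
The plan is to analyze descent of $(A_x,i_x,V_x)$ via the cohomology class $[c_x]\in H^2(\G_k,\Aut(x))$: its vanishing is the precise obstruction to the existence of a model over $k$, and similarly for $(A_x,i_x)$ with $[c'_x]$. The inclusion $\iota:\Aut(x)\hookrightarrow\Aut(x')$ induces $\iota_*:H^2(\G_k,\Aut(x))\to H^2(\G_k,\Aut(x'))$ carrying $[c_x]$ to $[c'_x]$, and Proposition~\ref{fieldMB} identifies the vanishing of $[c'_x]$ with the condition $B\otimes_\Q k\cong\M_2(k)$.

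For (1), assume $B\otimes_\Q k\cong\M_2(k)$, so $[c'_x]=0$. Since $\Aut(x)$ and $\Aut(x')$ are cyclic of order $2$, $4$, or $6$ with $\Aut(x)\leq\Aut(x')$, the containment $\iota$ is one of: an equality, which holds whenever $\Aut(x)\neq\{\pm 1\}$ (an element of order $>2$ in a cyclic group of order $\leq 6$ generates it); the split inclusion $\Z/2\Z\hookrightarrow\Z/6\Z$, split via $\Z/6\Z\cong\Z/2\Z\times\Z/3\Z$; or the non-split inclusion $\Z/2\Z\hookrightarrow\Z/4\Z$. In the first two cases $\iota_*$ is injective, so $[c_x]=0$ and a $k$-model exists. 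The hypothesis ``$\Aut(x)\neq\{\pm 1\}$ or $\Aut(x')\not\cong\Z/4\Z$'' excludes exactly the third case.

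For (2), with $\Aut(x)=\{\pm 1\}$, one has $[c_x]\in H^2(\G_k,\Z/2\Z)\cong\Br(k)[2]$. Over a number field, every $2$-torsion Brauer class is represented by a quaternion algebra (by Albert--Brauer--Hasse--Noether, the index equals the exponent), and such an algebra is split by any of its quadratic subfields. Choosing a quadratic $K/k$ with $[c_x]|_{\G_K}=0$, write $c_x|_{\G_K}=d\psi$ for some $\psi:\G_K\to\{\pm 1\}$; then $\phi'_\sigma:=\psi(\sigma)^{-1}\phi_\sigma$ satisfies $\phi'_\sigma\circ{}^\sigma\phi'_\tau=\phi'_{\sigma\tau}$ (using centrality of $\{\pm 1\}$), giving a genuine $1$-cocycle, and effective descent for abelian schemes equipped with the $\cO$-action and the subgroup $V$ produces an object over $K$ realizing $(A_x,i_x,V_x)$.

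The principal obstacle is in (2): securing that a \emph{quadratic} extension suffices rests on the global-field input that $\Br(k)[2]$ is carried by quaternion algebras. Over a general field, $[c_x]$ would only die over an extension of degree equal to its index, which need not be $2$. The analysis in (1) is more combinatorial, pivoting on the observation that $\Z/2\Z\hookrightarrow\Z/4\Z$ is the unique non-split inclusion among the cyclic subgroup pairs that arise, so the hypothesis is precisely tailored to guarantee injectivity of $\iota_*$ on $2$-cohomology.
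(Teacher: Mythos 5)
Your proposal is correct and uses exactly the framework the paper sets up in Section~\ref{fieldofdefinition}: the obstruction classes $[c_x]\in H^2(\G_k,\Aut(x))$ and $[c'_x]\in H^2(\G_k,\Aut(x'))$, the compatibility $\iota_*[c_x]=[c'_x]$, Proposition~\ref{fieldMB} for the vanishing of $[c'_x]$, and, for part~(2), the identification $H^2(\G_k,\{\pm1\})\cong\Br(k)[2]$ together with index-equals-exponent over number fields, which is precisely what underlies the local criterion in Lemma~\ref{fieldofdef}. One small slip in part~(1): the parenthetical ``an element of order $>2$ in a cyclic group of order $\leq 6$ generates it'' is false as stated (an order-$3$ element of $\Z/6\Z$ does not generate); the correct justification for $\Aut(x)=\Aut(x')$ when $\Aut(x)\ne\{\pm1\}$ is Lagrange: $|\Aut(x)|\in\{4,6\}$ must divide $|\Aut(x')|\in\{2,4,6\}$, forcing equality. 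You should also note that the splitting $\Z/6\Z\cong\Z/2\Z\times\Z/3\Z$ used to get injectivity of $\iota_*$ is $\G_k$-equivariant because it is the primary decomposition, hence preserved by any action through group automorphisms; with that remark the argument is complete.
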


\begin{lem}[{\cite[Lemma 4.3]{AM}}]
\label{fieldofdef}

Let $K$ be a quadratic extension of $k$.
Assume $\Aut(x)=\{\pm 1\}$.
Then the following two conditions are equivalent.

(1)
We can take $(A_x,i_x,V_x)$ to be defined over $K$.

(2)
For any place $v$ of $k$ satisfying $[c_x]_v\ne 0$,
the tensor product $K\otimes_k k_v$ is a field.
%(i.e. $v$ does not split in $K/k$).

\end{lem}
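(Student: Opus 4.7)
The plan is to recognise $[c_x]$ as a Weil descent obstruction, use the identification $H^2(\G_k,\{\pm 1\})\simeq\Br(k)[2]$ to recast (1) as the triviality of a $2$-torsion Brauer class after base change to $K$, and then read off (2) as the corresponding local condition via the Albert-Brauer-Hasse-Noether exact sequence.

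For the descent step, I would appeal to the standard principle that, for a moduli problem with automorphism group $\Aut(x)$, the triple $(A_x,i_x,V_x)$ over $\kb$ can be defined over an intermediate field $k\subseteq F\subseteq\kb$ if and only if the restriction $\Res_{F/k}[c_x]\in H^2(\G_F,\Aut(x))$ vanishes: modifying the chosen $\phi_\sigma$ by a $1$-cochain $u_\sigma\in\Aut(x)$ alters $c_x$ by a coboundary, so a trivial class lets one adjust the $\phi_\sigma$ ($\sigma\in\G_F$) into an honest $1$-cocycle, and for QM-abelian surfaces with a stable $\cO$-line such cocycles always arise from an $F$-form of the triple. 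Applied to $F=K$ with $\Aut(x)=\{\pm 1\}$, condition (1) is equivalent to $\Res_{K/k}[c_x]=0$ in $H^2(\G_K,\{\pm 1\})$.

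Next, because $\G_k$ acts trivially on $\{\pm 1\}\subseteq\kb^{\times}$, Kummer theory and Hilbert 90 furnish a functorial identification $H^2(\G_k,\{\pm 1\})\simeq\Br(k)[2]$ under which $\Res_{K/k}$ becomes base change $\Br(k)\to\Br(K)$; hence (1) is equivalent to $[c_x]\otimes_kK=0$ in $\Br(K)$. By Albert-Brauer-Hasse-Noether this holds iff, for every place $v$ of $k$, the image of $[c_x]_v$ in $\prod_{w\mid v}\Br(K_w)$ vanishes. A case distinction then finishes the argument: if $v$ splits in $K$ then some $K_w$ equals $k_v$ and the corresponding restriction map is the identity, forcing $[c_x]_v=0$; if $v$ does not split then $K\otimes_kk_v$ is a single quadratic extension of the local field $k_v$, and restriction to such an extension multiplies the local Brauer invariant by $2$, automatically killing $2$-torsion. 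Combining the two cases gives exactly the condition that $[c_x]_v\neq 0$ forces $K\otimes_kk_v$ to be a field, which is (2).

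The main technical obstacle is the effectivity built into the descent step: one must verify that cohomological vanishing of $[c_x]$ over $F$ is not only necessary but sufficient for producing an actual $F$-model of $(A_x,i_x,V_x)$. For QM-abelian surfaces equipped with a stable $\cO$-submodule this follows from Weil-Shimura descent for polarised abelian varieties with level structure and is implicit in the construction of $M_0^B(p)$; everything else is local Brauer-group bookkeeping.
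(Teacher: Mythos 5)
Your proof is correct and, as far as one can tell, follows the standard line of argument that underlies [AM, Lemma 4.3]; the paper itself does not reproduce a proof of this lemma but simply cites it, so there is no internal proof to compare against. The three ingredients you use -- (i) the abelian Weil descent criterion, which applies because $\Aut(x)=\{\pm 1\}$ is abelian with trivial Galois action so that $[c_x]$ lives in honest $H^2$, (ii) the identification $H^2(\G_k,\{\pm 1\})\cong\Br(k)[2]$ coming from the Kummer sequence and Hilbert 90, under which $\Res_{K/k}$ is base change on Brauer groups, and (iii) the Albert--Brauer--Hasse--Noether local--global principle together with the local computation that a $2$-torsion Brauer class dies under restriction to a quadratic local extension but survives under restriction to a split one -- are all correctly invoked, and your split/non-split case analysis at each $v$ is exactly right (including the implicit treatment of archimedean $v$, where $\Br(\C)=0$ gives the same conclusion as multiplication by $2$). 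The one point you flag as a ``technical obstacle,'' namely effectivity of the descent datum for the triple $(A_x,i_x,V_x)$, is indeed the place one must be careful, but it is handled correctly by your appeal to Weil descent for quasi-projective varieties with extra (endomorphism and level) structure; this is the content that Jordan and Arai--Momose supply in setting up $M^B$ and $M_0^B(p)$, and it is the same justification the cited reference relies on. In short: correct, and aligned with the expected argument.
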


\section{Classification of characters}
\label{charI}

We keep the notation in Section \ref{fieldofdefinition}.
Throughout this section,
assume $\Aut(x)=\{\pm 1\}$.
Let $K$ be a quadratic extension of $k$
which satisfies the equivalent conditions in
Lemma \ref{fieldofdef}.
%If $B\otimes_{\Q}k\cong\M_2(k)$, we impose no restrictions
%on the quadratic extension $K$.
%Note that the quadratic extension $K$ has no restriction
%if $B\otimes_{\Q}k\cong\M_2(k)$.
Then $x$ is represented by a triple $(A,i,V)$,
where $(A,i)$ is a QM-abelian surface over $K$ and $V$ is a left
$\cO$-submodule of $A[p](\Kb)$ with $\F_p$-dimension $2$
stable under the action of $\G_K$.
Let
$$\lambda:\G_K\longrightarrow\F_p^{\times}$$
be the character associated to $V$ in (\ref{lambda}).
For a prime $\mfl$ of $k$ (\resp $K$), let $I_{\mfl}$ denote the inertia
subgroup of $\G_k$ (\resp $\G_K$) at $\mfl$.

Let $\lambdaab:\Gab_K\longrightarrow\F_p^{\times}$
be the natural map induced from $\lambda$.
Put
\begin{equation}
\label{phi}
\varphi:=\lambdaab\circ\tr_{K/k}:\G_k\longrightarrow\Gab_K
\longrightarrow\F_p^{\times} %\nonumber
%\leqno(5.1)
\end{equation}
where $\tr_{K/k}:\G_k\longrightarrow\Gab_K$ is the transfer map.
Notice that the induced map
$\trab_{K/k}:\Gab_k\longrightarrow\Gab_K$
from $\tr_{K/k}$ corresponds to the natural inclusion
$\kA^{\times}\hookrightarrow\KA^{\times}$
via class field theory
(\cite[Theorem 8 in \S 9 of Chapter XIII, p.276]{We}).
We know that $\varphi^{12}$ is unramified at every prime of $k$
not dividing $p$ (\cite[Corollary 5.2]{AM}), and so
$\varphi^{12}$ corresponds to a character of the
ideal group $\mfI_k(p)$ consisting of
fractional ideals of $k$ prime to $p$.
By abuse of notation, let $\varphi^{12}$ also denote
the corresponding character
of $\mfI_k(p)$.

Let $\cMn$ be the set of prime numbers $q$ such that $q$ splits
completely in $k$.
%and $q$ does not divide $6h_k$.
Let $\cNn$ be the set of primes $\mfq$ of $k$ such that
$\mfq$ divides some prime number $q\in \cMn$.
%Let $\mfI_k$ be the ideal group of $k$, and let $\mfP_k$
%be the subgroup of $\mfI_k$ consisting of principal ideals.
Take a finite subset $\emptyset\ne \cSn\subseteq \cNn$
which generates the ideal class group of $k$. %$Cl_k=\mfI_k/\mfP_k$.
For each prime $\mfq\in \cSn$, fix an element $\alpha_{\mfq}\in\cO_k\setminus\{0\}$
satisfying $\mfq^{h_k}=\alpha_{\mfq}\cO_k$.

For an integer $n\geq 1$, put
$$\cFR(n):=\Set{\beta\in\C|
\beta^2+a\beta+n=0 \text{ for some integer $a\in\Z$ with $|a|\leq 2\sqrt{n}$}}.$$
For any element $\beta\in\cFR(n)$, we have $|\beta|=\sqrt{n}$.
%
%Notice that $|a|\leq 2\sqrt{q}$ implies $|a|<2\sqrt{q}$
%since $2\sqrt{q}$ is not a rational number.
%
For a prime $\mfq$ of $k$,
put $\N(\mfq)=\sharp(\cO_k/\mfq)$.
If $\mfq\in\cSn$,
then $\N(\mfq)$ is a prime number.
Define the sets

\noindent
$\cE(k):=\Set{\varepsilon_0=\displaystyle\sum_{\sigma\in\Gal(k/\Q)}a_{\sigma}\sigma
\in\Z[\Gal(k/\Q)]|
a_{\sigma}\in\{0,8,12,16,24 \}}$,

\noindent
$\cMn_1(k):=
\Set{(\mfq,\varepsilon_0,\beta_{\mfq})|
\mfq\in \cSn,\ \varepsilon_0
\in\cE(k),\ 
\beta_{\mfq}\in\cFR(\N(\mfq))}$,

\noindent
$\cMn_2(k):=\Set{\Norm_{k(\beta_{\mfq})/\Q}(\alpha_{\mfq}^{\varepsilon_0}-\beta_{\mfq}^{24h_k})\in\Z|
(\mfq,\varepsilon_0,\beta_{\mfq})\in\cMn_1(k)}\setminus\{0\}$,

\noindent
$\cNn_0(k):=\Set{\text{$l$ : prime number}|\text{$l$ divides some integer $m\in\cMn_2(k)$}}$,

\noindent
$\cTn(k):=\Set{\text{$l'$ : prime number}|\text{$l'$ is divisible
by some prime $\mfq'\in \cSn$}}
\cup\{2,3\}$,

\noindent
$\cNn_1(k):=\cNn_0(k)\cup\cTn(k)\cup\Ram(k)$.

\noindent
Note that all the sets, $\cFR(n), \cMn_1(k)$, $\cMn_2(k)$, $\cNn_0(k)$, $\cTn(k)$,
and $\cNn_1(k)$, are finite.

\begin{thm}[\cf {\cite[Theorem 5.6]{AM}}]
\label{type23phi}

Assume that $k$ is Galois over $\Q$.
If $p\not\in\cNn_1(k)$
(and if $p$ does not divide $d$),
then the character
$\varphi:\G_k\longrightarrow \F_p^{\times}$
%associated to a non-CM point of $M_0^B(p)(k)$ 
is of one of the following types.

Type 2:
$\varphi^{12}=\theta_p^{12}$ and $p\equiv 3\bmod{4}$.

Type 3:
There is an imaginary quadratic field $L$ satisfying the following
two conditions.

%\noindent
%(a)
%The prime number $p$ splits in $L$.

\noindent
(a)
The Hilbert class field $H_L$ of $L$ is contained in $k$.

\noindent
(b)
There is a prime $\mfp_L$ of $L$ lying over $p$
such that
$\varphi^{12}(\mfa)\equiv\delta^{24}\bmod{\mfp_L}$ holds
for any fractional ideal $\mfa$ of $k$ prime to $p$.
Here $\delta$ is any element of $L$ such that
$\Norm_{k/L}(\mfa)=\delta\cO_L$.

\end{thm}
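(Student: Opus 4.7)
The plan is to adapt the Mazur--Momose dichotomy for Galois-stable $\cO$-submodules: combine a local analysis of $\varphi^{12}$ at primes of $k$ above $p$ with a global analysis of $\varphi^{12}(\mfq)$ at the auxiliary primes $\mfq\in\cSn$, then compare the two expressions on the principal ideal $\alpha_\mfq\cO_k=\mfq^{h_k}$. Locally at a prime $\mfp$ of $k$ above $p$, the character $\varphi^{12}|_{I_\mfp}$ is governed by $p$-adic Hodge theory. Since $\lambda^{24}$ records the $\G_K$-action on the $24$th tensor power of the line $V\subseteq A[p]$, sitting inside the Tate module of the QM abelian surface $A$ (which has potentially good reduction everywhere, a general fact for QM abelian surfaces), Raynaud's theorem on finite flat group schemes of $p$-power order, combined with Fontaine--Laffaille theory, forces $\lambda^{24}$ on inertia to be a product of powers of Serre's fundamental characters whose exponents correspond to the possible Newton slopes $\{0,\tfrac{1}{3},\tfrac{1}{2},\tfrac{2}{3},1\}$ of such a surface with discriminant prime to $p$. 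Unwinding the transfer $\tr_{K/k}$ (which on $\G_K$ doubles) and translating via local class field theory yields some $\varepsilon_0\in\cE(k)$ with $\varphi^{12}(\alpha)\equiv\alpha^{\varepsilon_0}\pmod{\mfP}$ for every nonzero $\alpha\in\cO_k$ prime to $p$ and every prime $\mfP$ of $\Qb$ above $p$; here the hypothesis that $k/\Q$ be Galois is used to index Hodge--Tate weights by $\Gal(k/\Q)$, and the coefficient set $\{0,8,12,16,24\}$ is $24$ times the slope set.

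For $\mfq\in\cSn$ above a rational prime $q\in\cMn$ which splits completely in $k$, the Frobenius at $\mfq$ on the appropriate rank-two factor of $T_pA$ has characteristic polynomial of the shape $X^2+aX+q$ with $|a|\leq 2\sqrt{q}$, so its eigenvalue $\beta_\mfq$ lies in $\cFR(\N(\mfq))$. Tracking $\varphi=\lambdaab\circ\tr_{K/k}$ and taking $12$th powers gives $\varphi^{12}(\mfq)\equiv\beta_\mfq^{24}\pmod{\mfP}$ for a suitable prime $\mfP$ of $\Qb$ above $p$; raising to the $h_k$-th power and comparing with the local formula yields
\[ \alpha_\mfq^{\varepsilon_0}\equiv \varphi^{12}(\alpha_\mfq)\equiv \beta_\mfq^{24h_k}\pmod{\mfP}. \]
Hence $p$ divides the integer $\Norm_{k(\beta_\mfq)/\Q}(\alpha_\mfq^{\varepsilon_0}-\beta_\mfq^{24h_k})$. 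Were this integer nonzero it would lie in $\cMn_2(k)$, forcing $p\in\cNn_0(k)\subseteq\cNn_1(k)$, contrary to hypothesis; so the identity $\alpha_\mfq^{\varepsilon_0}=\beta_\mfq^{24h_k}$ actually holds in $\Qb$.

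From this identity one extracts the dichotomy. Comparing absolute values at every complex place, $|\beta_\mfq|=\sqrt{q}$ forces $\sum_\sigma a_\sigma=24$; the assumption $p\notin\cTn(k)$ then excludes degenerate possibilities coming from the primes $2$, $3$, and the rational primes below $\cSn$. If $\varepsilon_0=12\sum_\sigma\sigma$ (the ``symmetric'' shape), then $\alpha_\mfq^{\varepsilon_0}=\Norm_{k/\Q}(\alpha_\mfq)^{12}=q^{12h_k}$, so $\beta_\mfq^{2}=\pm q$; tracing back through the relation $\lambda\lambda'=\theta_p$ on $\det\rhob_{A,p}$ one concludes $\varphi^{12}=\theta_p^{12}$, and the requirement that $\beta_\mfq^2=-q$ have a root in the residue field at $\mfP$ forces $p\equiv 3\pmod 4$. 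Otherwise the support of $\varepsilon_0$ singles out an imaginary quadratic subfield $L\subset k$, and since the identity holds simultaneously for all $\mfq\in\cSn$---which generate the ideal class group of $k$---one deduces $H_L\subseteq k$ together with the prescribed congruence $\varphi^{12}(\mfa)\equiv\delta^{24}\pmod{\mfp_L}$ with $\delta\cO_L=\Norm_{k/L}(\mfa)$.

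The main obstacle will be the local step: establishing that the admissible exponents are exactly $\cE(k)$ via $p$-adic Hodge theory. This requires a Raynaud-type analysis of the finite flat $\F_p$-group scheme cutting out $V$, respecting the left $\cO$-action, together with a classification of the Newton polygons of a QM abelian surface of discriminant prime to $p$. The remaining steps---Frobenius traces at split primes, the norm-comparison argument, and the Type $2$ versus Type $3$ case analysis---follow the template already present in \cite{AM}, modified only to accommodate the slightly redefined set $\cNn_1(k)$.
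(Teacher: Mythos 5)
Your proposal reconstructs the overall architecture of the argument in \cite[Theorem 5.6]{AM} correctly: a local step at primes above $p$ producing an exponent $\varepsilon_0\in\cE(k)$ via $p$-adic Hodge theory, a Frobenius/eigenvalue computation at auxiliary split primes $\mfq\in\cSn$ producing $\beta_\mfq\in\cFR(\N(\mfq))$, the norm comparison forcing the identity $\alpha_\mfq^{\varepsilon_0}=\beta_\mfq^{24h_k}$ in $\Qb$ when $p\not\in\cNn_0(k)$, and the final case split. However, the paper's own proof is \emph{only} a short modification of \cite[Theorem 5.6]{AM}, and the entire content lies in two specific adjustments that you leave implicit under the phrase ``modified only to accommodate the slightly redefined set $\cNn_1(k)$'': (i) potential good reduction of $A\otimes_K K_{\mfq_K}$ after a totally ramified extension is obtained from \cite[Prop.\ 3.2]{J} without the hypothesis $q\geq 5$; and (ii) in the Type 2 case, $\beta_\mfq^{24h_k}=q^{12h_k}$ yields only $\beta_\mfq^{24}=q^{12}$, without the hypothesis $q\nmid 6h_k$.

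There is also a concrete error in your Type 2 step: you assert that $\alpha_\mfq^{\varepsilon_0}=q^{12h_k}=\beta_\mfq^{24h_k}$ gives ``$\beta_\mfq^2=\pm q$.'' That does not follow. Using $\bar\beta_\mfq=\zeta\beta_\mfq$ with $\zeta^{24h_k}=1$ and $[\Q(\beta_\mfq):\Q]\leq 2$ (so $\zeta^4=1$ or $\zeta^6=1$, hence $\zeta^{12}=1$), one gets $\beta_\mfq^{12}\in\R$ and $|\beta_\mfq|=\sqrt{q}$, hence $\beta_\mfq^{12}=\pm q^6$, i.e.\ only $\beta_\mfq^{24}=q^{12}$. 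The paper explicitly notes that $\beta_\mfq^{12}=-q^6$ genuinely occurs (take $q=2$, $\beta_\mfq=1+\sqrt{-1}$: then $\beta_\mfq^2=2\sqrt{-1}\ne\pm2$), so your stronger claim $\beta_\mfq^2=\pm q$ is false, and the ensuing deduction of $p\equiv 3\bmod 4$ from ``$\beta_\mfq^2=-q$ has a root mod $\mfP$'' is unsupported as written. This is precisely the place where the original argument of \cite{AM} needed the extra hypothesis $q\nmid 6h_k$, and the point of the present proof is to show the dichotomy survives with the weaker conclusion $\beta_\mfq^{24}=q^{12}$. Finally, you flag the $p$-adic Hodge theory step (Raynaud/Fontaine--Laffaille classification of the inertial exponents to obtain $\cE(k)$) as ``the main obstacle'' without carrying it out; the paper does not redo this step either, but explicitly defers to \cite{AM}, which is a legitimate move that your blind attempt cannot make.
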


\begin{proof}

It suffices to modify the proof of Theorem 5.6 in \cite{AM} slightly.
First, notice that the abelian surface
$A\otimes_K K_{\mfq_K}$ over $K_{\mfq_K}$
(which corresponds to $x\otimes K_{\mfq_K}$)
has good reduction after
a totally ramified finite extension $M(\mfq)/K_{\mfq_K}$
without assuming $q\geq 5$
(\cite[Proposition 3.2, p.101]{J}).
Second, in the case where $\varepsilon$ is of type 2,
notice that $\beta_{\mfq}^{24h_k}=q^{12h_k}$
implies $\beta_{\mfq}^{24}=q^{12}$
without assuming $q\nmid 6h_k$.
Now we show this.
Write $\beta=\beta_{\mfq}$ for simplicity.
Let $\betab$ be the complex conjugate of $\beta$.
Since $\beta^{24h_k}=\betab^{24h_k}$, we have
$\betab=\zeta\beta$ for some $\zeta\in\C$ with $\zeta^{24h_k}=1$.
Since $\Q(\beta)=\Q(\betab)=\Q(\zeta\beta)=\Q(\beta, \zeta)\supseteq\Q(\zeta)$
and $[\Q(\beta):\Q]=2$,
we have $\zeta^4=1$ or $\zeta^6=1$.
Then $\zeta^{12}=1$.
This implies $\betab^{12}=\zeta^{12}\beta^{12}=\beta^{12}$,
and so $\beta^{12}\in\Q$.
Since $|\beta|=\sqrt{q}$, we have $\beta^{12}=\pm q^6$,
and hence $\beta^{24}=q^{12}$.
Notice that the case $\beta^{12}=-q^6$ really occurs
(e.g. $q=2$ and $\beta=1+\sqrt{-1}$).

\end{proof}

From now to the end of this section, assume that $k$ is Galois over $\Q$.
The set $\cNn_1(k)$ may differ from $\cN_1(k)$ in \cite{AM},
nevertheless we have the following.

\begin{lem}[\cf {\cite[Lemma 5.11]{AM2}}]
\label{type2lambda}

Suppose $p\geq 11$, $p\ne 13$ and $p\not\in\cNn_1(k)$.
Further assume the following two conditions.

(a)
Every prime $\mfp$ of $k$ above $p$ is inert in $K/k$.

(b)
Every prime $\mfq\in\cSn$ is ramified in $K/k$.

\noindent
If $\varphi$ is of type 2, then we have the following.

(i) The character $\lambda^{12}\theta_p^{-6}:\G_K\longrightarrow\F_p^{\times}$
is unramified everywhere.

(ii) The map $Cl_K\longrightarrow\F_p^{\times}$ induced from $\lambda^{12}\theta_p^{-6}$
is trivial on
$C_{K/k}:=\im(Cl_k\longrightarrow Cl_K)$,
where $Cl_K$ is the ideal class group of $K$
and
$Cl_k\longrightarrow Cl_K$
is the map defined by
$[\mfa]\longmapsto[\mfa\cO_K]$.

\end{lem}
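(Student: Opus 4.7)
The plan is to introduce $\mu := \lambda^{12}\theta_p^{-6}: \G_K \to \F_p^\times$ and first pull it back to $\G_k$ via transfer in order to obtain a useful global identity. Since $\varphi = \lambdaab \circ \tr_{K/k}$ by (\ref{phi}), one has $\lambda^{12} \circ \tr_{K/k} = \varphi^{12}$. Class field theory identifies $\tr_{K/k}$ with the id\`ele-theoretic inclusion $\kA^\times \hookrightarrow \KA^\times$, so a place-by-place computation at primes $\mfp$ of $k$ above $p$ (where the local component collapses to $u \mapsto \N_{k_\mfp/\Qp}(u)^{[K:k]} = \N_{k_\mfp/\Qp}(u)^2$ regardless of whether $\mfp$ is split, inert, or ramified in $K/k$) shows $\theta_p^K \circ \tr_{K/k} = (\theta_p^k)^2$. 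Combined with the type 2 identity $\varphi^{12} = \theta_p^{12}$, this gives $\mu \circ \tr_{K/k} = 1$ on $\G_k$, which by the transfer formula for an index-$2$ normal subgroup is equivalent to $\mu\cdot\mu^\tau = 1$ on $\Gab_K$, where $\tau \in \G_k\setminus\G_K$ acts by conjugation.

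For part (i), I would verify unramifiedness of $\mu$ prime by prime. At a prime $\mfq_K$ of $K$ not above $p$: by the observation invoked in the proof of Theorem \ref{type23phi}, the QM-abelian surface $(A,i)$ acquires good reduction over a totally ramified finite extension of $K_{\mfq_K}$, so by N\'eron--Ogg--Shafarevich the inertia $I_{\mfq_K}$ acts on $T_pA$ through a finite quotient that embeds $\cO$-linearly into $\Aut_\cO(A)$, a cyclic group of order $2$, $4$, or $6$ in characteristic $0$. Thus $\lambda^{12}|_{I_{\mfq_K}}$ is trivial, and $\theta_p^{-6}$ is automatically unramified at $\mfq_K$ since $\mfq_K \nmid p$. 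At a prime $\mfp_K$ of $K$ above $p$: condition (a) forces $\mfp_K = \mfp\cO_K$ with $\mfp$ inert, so $K_{\mfp_K}/k_\mfp$ is unramified quadratic and $I_{\mfp_K} \subseteq \G_K$ coincides with $I_\mfp \subseteq \G_k$. Taking $\tau$ in the decomposition group and lifting the nontrivial element of $\Gal(K_{\mfp_K}/k_\mfp)$, conjugation by $\tau$ acts on the tame quotient of $I_{\mfp_K}$ as the $\N(\mfp)$-th power map, and the identity $\mu(\sigma)\mu(\sigma^\tau)=1$ becomes an algebraic constraint on $\lambda^{12}|_{I_{\mfp_K}}$. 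Combining this with the classification of the possible tame characters $\lambda|_{I_{\mfp_K}}$ afforded by Raynaud/crystalline theory for the $\cO$-stable finite flat subgroup scheme cut out by $V$ in $A[p]$ should pin down $\lambda^{12}|_{I_{\mfp_K}} = \theta_p^6|_{I_{\mfp_K}}$, giving $\mu|_{I_{\mfp_K}} = 1$.

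For part (ii), once (i) is in hand, $\mu$ factors through a character of $Cl_K$ and it suffices to check $\mu([\mfa\cO_K]) = 1$ for $\mfa$ a fractional ideal of $k$ prime to $p$, since every class in $Cl_k$ has such a representative. The class field theoretic interpretation of $\varphi$ gives $\lambda^{12}(\mfa\cO_K) = \varphi^{12}(\mfa)$, and the type 2 identity yields $\varphi^{12}(\mfa) = \theta_p^{12}(\mfa) = \N_{k/\Q}(\mfa)^{12}$. On the other hand $\theta_p^6(\mfa\cO_K) = \N_{K/\Q}(\mfa\cO_K)^6 = \N_{k/\Q}(\mfa)^{12}$ since $[K:k]=2$, so $\mu([\mfa\cO_K]) = 1$ as required. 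The main obstacle in this plan is the $p$-adic part of (i): the global relation $\mu\mu^\tau = 1$ by itself only forces $\mu^\tau = \mu^{-1}$, which is still compatible with $\mu$ being ramified at $\mfp_K$, so one genuinely needs the crystalline/Raynaud constraint on $\lambda|_{I_{\mfp_K}}$ coming from the QM structure on $A[p]$ in order to conclude unramifiedness at primes above $p$.
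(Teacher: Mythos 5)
Your argument has a genuine gap in part (i) at primes of $K$ above $p$, and you flag it yourself. The transfer relation $\mu\mu^\tau = 1$ does not by itself force $\mu|_{I_{\mfp_K}}$ to vanish, and the crystalline/Raynaud analysis of $\lambda|_{I_{\mfp_K}}$ that you invoke is precisely the technical heart of the statement. The paper does not repeat that analysis either; its proof simply records that the argument of [AM2, Lemma 5.11] carries over, which in particular uses hypothesis (a) (inertness above $p$) to compare $I_{\mfp_K}$ with $I_{\mfp}$ and then uses the classification of the finite flat $\cO$-stable subgroup scheme of $A[p]$ over $\cO_{K_{\mfp_K}}$ to determine $\lambda|_{I_{\mfp_K}}$ up to a restricted set of exponents. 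Your phrase ``should pin down $\lambda^{12}|_{I_{\mfp_K}} = \theta_p^6|_{I_{\mfp_K}}$'' is the conclusion of that analysis, not something that can be asserted. Until that step is supplied, part (i) is unproved at $p$, and part (ii) inherits the gap, since it needs (i) to know that $\mu$ even defines a character of $Cl_K$.

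On the other hand, your route for part (ii), granting (i), is correct and genuinely different from the paper's. The paper follows [AM2, Lemma 5.11(ii)]: it evaluates $\mu$ on $\mfq\cO_K$ for $\mfq\in\cSn$ using the Frobenius eigenvalue $\beta_{\mfq}$ of the reduction, and the only modification it records is replacing $\beta_{\mfq}=\pm\sqrt{-q}$ with $\beta_{\mfq}^{24}=q^{12}$; hypothesis (b) enters so that $\mfq\cO_K$ is a prime square. You instead feed the type 2 identity $\varphi^{12}=\theta_p^{12}$ through the class-field-theoretic relation $\lambda^{12}(\mfa\cO_K)=\varphi^{12}(\mfa)$ together with $\theta_p^{6}(\mfa\cO_K)=\N_{k/\Q}(\mfa)^{12}$, obtaining triviality of $\mu$ on $C_{K/k}$ without touching $\beta_{\mfq}$ or hypothesis (b) at all. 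That is a cleaner path and shows that, once the type 2 identity is available, the $\beta_{\mfq}$ bookkeeping (and the modification $\beta_{\mfq}^{24}=q^{12}$ that the paper emphasizes) is dispensable for (ii).
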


\begin{proof}

(ii)
The argument in the proof of Lemma 5.11 (ii) in \cite{AM2} works here, but we may not have
$\beta=\pm\sqrt{-q}$.
Nevertheless we have $\beta^{24}=q^{12}$ as seen in
the proof of Theorem \ref{type23phi}, and so we are done.

\end{proof}

We also have the following with the same proof as before.

\begin{lem}[\cf {\cite[Lemma 5.6]{A3}, \cite[Lemma 5.12]{AM}, \cite[3]{AM2}}]
\label{q/p-1}

Suppose $p\geq 11$, $p\ne 13$ and $p\not\in\cNn_1(k)$.
Assume that $\varphi$ is of type 2.
Let $q<\frac{p}{4}$ be a prime number
which splits completely in $k$.
Then we have
$\left(\frac{q}{p}\right)=-1$
and
$q^{\frac{p-1}{2}}\equiv -1\bmod{p}$.
Furthermore, we have
$B\otimes_{\Q}\Q(\sqrt{-q})\cong\M_2(\Q(\sqrt{-q}))$.

\end{lem}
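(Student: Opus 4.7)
The plan is to follow the proofs of \cite[Lemma 5.6]{A3} and \cite[Lemma 5.12]{AM} verbatim, as the lemma statement indicates, with the single modification that the identity $\beta_{\mfq}=\pm\sqrt{-q}$ used there is replaced by the weaker identity $\beta_{\mfq}^{24}=q^{12}$ established in the proof of Theorem \ref{type23phi} above. Once one checks that this weaker identity suffices at every step, the original arguments carry over.

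The first step is to fix a prime $\mfq$ of $k$ above $q$ (so $\N(\mfq)=q$ since $q$ splits completely in $k$) and a prime $\mfq_K$ of $K$ above $\mfq$. Since $q\notin\cNn_1(k)$, the abelian surface $A$ acquires good reduction over a totally ramified finite extension of $K_{\mfq_K}$ by \cite[Proposition 3.2]{J}, and the Frobenius of the reduction will produce a Weil number $\beta=\beta_{\mfq}\in\cFR(q)$ satisfying $\lambda^{12}(\mathrm{Frob}_{\mfq_K})\equiv\beta^{12}\pmod{\mfp}$ for a suitable prime $\mfp$ of $\Q(\beta)$ above $p$. Combining this with the type~$2$ relation $\varphi^{12}=\theta_p^{12}$, the transfer identity $\varphi=\lambdaab\circ\tr_{K/k}$, and the norm-divisibility argument (valid because $p\notin\cNn_0(k)$), I will deduce $\beta^{24}=q^{12}$ in $\Z$, hence $\beta^{12}=\pm q^{6}$.

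The next step is to compare the restriction of $\lambda^{12}$ to a decomposition group above $p$ with the cyclotomic character predicted by type~$2$ and the Hodge--Tate structure of the $p$-adic representation. This will yield a mod $p$ congruence relating $\beta^{12}$ to an explicit expression in $p$ and small roots of unity; the size constraint $q<p/4$ then eliminates the sign ambiguity and, combined with $p\geq 11$, $p\ne 13$ and $p\equiv 3\pmod{4}$, forces $q^{(p-1)/2}\equiv -1\pmod{p}$, i.e., $\left(\frac{q}{p}\right)=-1$. For the final assertion, $\beta$ lies in the centralizer of $\cO$ inside $\End^{0}(\tilde{A}_{\mfq_K})$, so $\Q(\beta)$ embeds into a ring containing $B$; combining $\left(\frac{q}{p}\right)=-1$ with $\beta^{24}=q^{12}$ forces $\Q(\beta)=\Q(\sqrt{-q})$, and since an imaginary quadratic field embeds into $B$ precisely when it splits $B$, we will obtain $B\otimes_{\Q}\Q(\sqrt{-q})\cong\M_2(\Q(\sqrt{-q}))$. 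The main obstacle will be the sign analysis ruling out $\beta^{12}=q^{6}$ in favour of $\beta^{12}=-q^{6}$; this is precisely where the constraint $q<p/4$ is indispensable, as without it both signs would remain compatible with the available mod $p$ congruences.
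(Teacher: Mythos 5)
The paper offers no explicit proof of this Lemma: after Lemma~\ref{type2lambda} it simply writes ``We also have the following with the same proof as before,'' deferring wholesale to the cited proofs of \cite[Lemma 5.6]{A3}, \cite[Lemma 5.12]{AM} and \cite[3]{AM2}. Your proposal is in the same spirit (follow those references), so at the level of strategy it is compatible with what the paper does.

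However, the concrete outline you give has two problems. First, you write ``Since $q\notin\cNn_1(k)$, the abelian surface $A$ acquires good reduction over a totally ramified finite extension of $K_{\mfq_K}$'' --- the hypothesis in the Lemma concerns $p$, not $q$, and in any case \cite[Proposition 3.2]{J} gives potential good reduction after a totally ramified extension at every finite place without any arithmetic condition on $q$; this is exactly the point the paper stresses in its proof of Theorem~\ref{type23phi}. Second, and more seriously, you claim to deduce $\beta_{\mfq}^{24}=q^{12}$ for the prime $\mfq$ above the arbitrary $q<p/4$ by invoking ``the norm-divisibility argument (valid because $p\notin\cNn_0(k)$).'' That argument is built on the set $\cMn_2(k)$, which by definition only records the norms $\Norm_{k(\beta_{\mfq})/\Q}(\alpha_{\mfq}^{\varepsilon_0}-\beta_{\mfq}^{24h_k})$ for primes $\mfq\in\cSn$. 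The prime $q<p/4$ in this Lemma is an arbitrary prime splitting completely in $k$ and need not lie below any prime of $\cSn$, so $p\notin\cNn_0(k)$ gives you no control over the corresponding norm, and the purported deduction $\beta_{\mfq}^{24}=q^{12}$ does not follow by that route. If the cited proofs need the relation $\beta^{12}=\pm q^6$ at the prime $\mfq\mid q$, it must be obtained by a different mechanism (e.g.\ directly from the type~2 relation $\varphi^{12}=\theta_p^{12}$ combined with the bound $q<p/4$), not from $\cNn_0(k)$.
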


%\begin{lem}
%\label{q^p-1/2}

%In the situation of Lemma \ref{q/p-1},
%we have
%$q^{\frac{p-1}{2}}\equiv -1\bmod{p}$.

%\end{lem}

%\begin{proof}

%Since $(q^{\frac{p-1}{2}})^2=q^{p-1}\equiv 1\bmod{p}$,
%we have
%$q^{\frac{p-1}{2}}\equiv \pm 1\bmod{p}$.
%Assume $q^{\frac{p-1}{2}}\equiv 1\bmod{p}$.
%Then
%$(q^{\frac{p+1}{4}})^2=q^{\frac{p+1}{2}}\equiv q\bmod{p}$.
%Then $\left(\frac{q}{p}\right)=1$,
%which contradicts $\left(\frac{q}{p}\right)=-1$.
%Therefore $q^{\frac{p-1}{2}}\equiv -1\bmod{p}$.

%\end{proof}

\section{Irreducibility result}
\label{charII}

Let $k$ be a number field, and
let $(A,i)$ be a QM-abelian surface by $\cO$ over $k$.
For a prime number $p$ not dividing $d$,
assume that the representation $\rhob_{A,p}$ in (\ref{rhobar}) is reducible.
Then there is a 1-dimensional sub-representation of $\rhob_{A,p}$;
let $\nu$ be its associated character.
In this case notice that there is a left $\cO$-submodule $V$
of $A[p](\kb)$ with $\F_p$-dimension $2$ on which $\G_k$ acts by $\nu$,
and so the triple $(A,i,V)$ determines a point $x\in M_0^B(p)(k)$.
Take any quadratic extension $K$ of $k$.
Then we have the characters
$\lambda:\G_K\longrightarrow\F_p^{\times}$
and
$\varphi:\G_k\longrightarrow\F_p^{\times}$
associated to the triple $(A\otimes_k K,i,V)$.
We know $\varphi=\nu^2$ by construction of $\varphi$.

\begin{thm}
\label{irred}

Let $k$ be a finite Galois extension of $\Q$ which does not contain
the Hilbert class field of any imaginary quadratic field.
Assume that there is a prime number $q$ which splits completely in $k$
and satisfies $B\otimes_{\Q}\Q(\sqrt{-q})\not\cong\M_2(\Q(\sqrt{-q}))$.
Let $p>4q$ be a prime number which also satisfies $p\geq 11$, $p\ne 13$,
$p\nmid d$ and $p\not\in\cNn_1(k)$.
Then the representation
$$\rhob_{A,p}:\G_k\longrightarrow\GL_2(\F_p)$$
is irreducible.

\end{thm}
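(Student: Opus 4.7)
The plan is to argue by contradiction. Suppose $\rhob_{A,p}$ is reducible, so that a character $\nu:\G_k\longrightarrow\F_p^{\times}$ appears as a subrepresentation. As noted in the paragraph preceding the theorem, this produces a left $\cO$-submodule $V\subseteq A[p](\kb)$ of $\F_p$-dimension $2$ which is stable under $\G_k$, and the resulting triple $(A,i,V)$ defines a $k$-rational point on $M_0^B(p)$; moreover, choosing any quadratic extension $K/k$ and forming the character $\varphi$ attached to $(A\otimes_k K,i,V)$ as in (\ref{phi}), the discussion preceding the theorem gives the identity $\varphi=\nu^2$ on $\G_k$.

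Next I would invoke Theorem \ref{type23phi}, whose hypotheses are satisfied since $k/\Q$ is Galois, $p\nmid d$, and $p\notin\cNn_1(k)$. This forces $\varphi$ to be of type 2 or type 3. Type 3 can be excluded immediately from the standing hypothesis on $k$: in that case the field $k$ would contain the Hilbert class field $H_L$ of some imaginary quadratic field $L$, contradicting the assumption that $k$ contains no such $H_L$. Therefore $\varphi$ must be of type 2.

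Having reduced to type 2, the final step is to invoke Lemma \ref{q/p-1} with the prime number $q$ supplied by the hypothesis of the theorem. Indeed the numerical conditions $p\geq 11$, $p\neq 13$, $p\notin\cNn_1(k)$, the splitting of $q$ in $k$, and the inequality $q<p/4$ (which follows from $p>4q$) are all satisfied. The lemma then yields $B\otimes_{\Q}\Q(\sqrt{-q})\cong\M_2(\Q(\sqrt{-q}))$, directly contradicting the defining property of $q$ in the statement of the theorem. This contradiction shows that $\rhob_{A,p}$ must be irreducible.

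The only point that genuinely needs checking is the identity $\varphi=\nu^2$; everything else is a bookkeeping application of the previously developed machinery. This identity is most naturally verified via class field theory: under the correspondence of the transfer $\trab_{K/k}:\Gab_k\longrightarrow\Gab_K$ with the idelic inclusion $\kA^{\times}\hookrightarrow\KA^{\times}$, the pullback of the character $\lambdaab=\nu|_{\G_K}$ along the transfer equals $\nu\cdot(\nu\circ\mathrm{conj})=\nu^2$ when $\nu$ extends to $\G_k$, which is exactly our situation. Granting this, the rest of the argument is a clean packaging of Theorem \ref{type23phi} and Lemma \ref{q/p-1}.
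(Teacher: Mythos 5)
Your argument is correct and follows the same route as the paper: assume reducibility, pass to $\varphi=\nu^2$, apply Theorem \ref{type23phi} to force type $2$ (type $3$ being excluded by the hypothesis that $k$ contains no Hilbert class field of an imaginary quadratic field), and then derive a contradiction with $B\otimes_{\Q}\Q(\sqrt{-q})\not\cong\M_2(\Q(\sqrt{-q}))$ via Lemma \ref{q/p-1}. The paper's proof is a terser version of the same reasoning, taking $\varphi=\nu^2$ and the exclusion of type $3$ for granted; your explicit verification of $\varphi=\nu^2$ via the transfer map is a correct spelling-out of what the paper calls ``by construction of $\varphi$.''
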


\begin{proof}

Assume that $\rhob_{A,p}$ is reducible.
Then the associated character $\varphi$ is of type 2 in Theorem \ref{type23phi},
because $k$ does not contain
the Hilbert class field of any imaginary quadratic field.
By Lemma \ref{q/p-1}, we have
$B\otimes_{\Q}\Q(\sqrt{-q})\cong\M_2(\Q(\sqrt{-q}))$,
which is a contradiction.

\end{proof}

\begin{thm}
\label{mainthm}

Let $k$ be a finite Galois extension of $\Q$ which does not contain
the Hilbert class field of any imaginary quadratic field.
%Then there is an effectively computable finite set $\cL(k)$
%of prime numbers satisfying the following.
Assume that there is a prime number $q$ which splits completely in $k$ and satisfies
$B\otimes_{\Q}\Q(\sqrt{-q})\not\cong\M_2(\Q(\sqrt{-q}))$, and
let $p>4q$ be a prime number which also satisfies
$p\geq 11$, $p\ne 13$, $p\nmid d$ and $p\not\in\cNn_1(k)$.

(1)
If $B\otimes_{\Q}k\cong\M_2(k)$, then $M_0^B(p)(k)=\emptyset$.

(2)
If $B\otimes_{\Q}k\not\cong\M_2(k)$, then
$M_0^B(p)(k)\subseteq\{\text{elliptic points of order $2$ or $3$}\}$.

\end{thm}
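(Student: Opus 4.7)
Given $x\in M_0^B(p)(k)$, I will descend the representing triple $(A_x,i_x,V_x)$ as close to $k$ as possible using Propositions \ref{fieldMB} and \ref{fieldM0Bp}, and then apply one of the two obstructions already established: the irreducibility Theorem \ref{irred}, or the classification of the associated character $\varphi$ via Theorem \ref{type23phi} and Lemma \ref{q/p-1}. Two genuinely different arguments are required, depending on whether the triple descends to $k$.

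\textbf{Case A: $(A_x,i_x,V_x)$ descends to $k$.}
Then $A:=A_x$ is a QM-abelian surface over $k$ and $V_x\subseteq A[p](\kb)$ is a $\G_k$-stable left $\cO$-submodule of $\F_p$-dimension $2$. As observed in Section \ref{QM}, the existence of such a $V_x$ forces the image of $\rhob_{A,p}$ to lie in the upper-triangular subgroup of $\GL_2(\F_p)$, so $\rhob_{A,p}$ is reducible. But the hypotheses on $k$, $q$ and $p$ in the present theorem are exactly those required by Theorem \ref{irred}, so $\rhob_{A,p}$ is in fact irreducible --- contradiction.

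\textbf{Case B: $(A_x,i_x,V_x)$ does not descend to $k$, but $\Aut(x)=\{\pm 1\}$.}
Proposition \ref{fieldM0Bp}(2) then supplies a quadratic extension $K/k$ over which the triple is defined, yielding the characters $\lambda:\G_K\to\F_p^{\times}$ of (\ref{lambda}) and $\varphi:\G_k\to\F_p^{\times}$ of (\ref{phi}). Since $k/\Q$ is Galois and $p\notin\cNn_1(k)$, Theorem \ref{type23phi} classifies $\varphi$ as being of type 2 or of type 3. Type 3 would force $H_L\subseteq k$ for some imaginary quadratic $L$, contradicting the hypothesis on $k$. If instead $\varphi$ is of type 2, Lemma \ref{q/p-1} applied to the given $q$ (whose hypotheses $p\ge 11$, $p\ne 13$, $p\notin\cNn_1(k)$, $q<p/4$, and the complete splitting of $q$ in $k$ all hold) yields $B\otimes_{\Q}\Q(\sqrt{-q})\cong\M_2(\Q(\sqrt{-q}))$, contradicting the hypothesis on $q$. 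Either alternative is a contradiction.

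\textbf{Assembly and main obstacle.}
For part (1), Proposition \ref{fieldMB} gives $(A_x,i_x)$ over $k$, and Proposition \ref{fieldM0Bp}(1) places $x$ into Case A unless $\Aut(x)=\{\pm 1\}$ and $\Aut(x')\cong\Z/4\Z$, in which residual subcase Case B applies; since both A and B are impossible, $M_0^B(p)(k)=\emptyset$. For part (2), Proposition \ref{fieldMB} excludes Case A outright; if $\Aut(x)=\{\pm 1\}$ then Case B produces a contradiction, whereas if $\Aut(x)\ne\{\pm 1\}$ then $\Aut(x)\cong\Z/4\Z$ or $\Z/6\Z$, so $x$ is an elliptic point of order $2$ or $3$. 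The main subtlety to watch is precisely the residual subcase inside (1): there $(A_x,i_x)$ descends to $k$ but the full triple does not, and one must remember to run the $\varphi$-argument of Case B rather than try to apply Theorem \ref{irred} directly to $A_x$ over $k$.
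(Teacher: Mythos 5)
Your proof is correct and follows essentially the same route as the paper's: decompose by whether the triple $(A_x,i_x,V_x)$ descends to $k$ (handled via Theorem \ref{irred}) or only to a quadratic extension with $\Aut(x)=\{\pm1\}$ (handled via Theorem \ref{type23phi} and Lemma \ref{q/p-1}), then assemble parts (1) and (2) using Propositions \ref{fieldMB} and \ref{fieldM0Bp}. The only cosmetic imprecision is your labeling of Case B as ``does not descend to $k$'': the residual subcase of (1) is detected by $\Aut(x)=\{\pm1\}$ and $\Aut(x')\cong\Z/4\Z$, not by a proven failure of descent, but since your Case-B argument only invokes $\Aut(x)=\{\pm1\}$ (to apply Proposition \ref{fieldM0Bp}(2)), nothing is affected.
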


\noindent
(Proof of Theorem \ref{mainthm})

Let $k$ be a finite Galois extension of $\Q$ which does not contain
the Hilbert class field of any imaginary quadratic field,
and let $q$ be a prime number which splits completely in $k$
and satisfies $B\otimes_{\Q}\Q(\sqrt{-q})\not\cong\M_2(\Q(\sqrt{-q}))$.
Let $p>4q$ be a prime number which also satisfies
$p\geq 11$, $p\ne 13$ and $p\nmid d$.
Take a point $x\in M_0^B(p)(k)$.

(1)
Suppose $B\otimes_{\Q}k\cong\M_2(k)$.

(1-i)
Assume $\Aut(x)\ne\{\pm 1\}$
or $\Aut(x')\not\cong\Z/4\Z$.
Then $x$ is represented by a triple $(A,i,V)$ defined over $k$
by Proposition \ref{fieldM0Bp} (1),
and the representation $\rhob_{A,p}$ is reducible.
By Theorem \ref{irred}, we have $p\in\cNn_1(k)$.

(1-ii)
Assume otherwise (i.e. $\Aut(x)=\{\pm 1\}$
and $\Aut(x')\cong\Z/4\Z$).
Then $x$ is represented by a triple $(A,i,V)$ defined over a quadratic extension of $k$
by Proposition \ref{fieldM0Bp} (2),
and we have a character
$\varphi:\G_k\longrightarrow\F_p^{\times}$ as in (\ref{phi}).
By Theorem \ref{type23phi} and Lemma \ref{q/p-1},
we have $p\in\cNn_1(k)$.

(2)
Suppose $B\otimes_{\Q}k\not\cong\M_2(k)$.
Further assume that $x$ is not an elliptic point of order $2$ or $3$;
this implies $\Aut(x)=\{\pm 1\}$.
By the same argument as in (1-ii),
we conclude $p\in\cNn_1(k)$.

\qed

\section{Estimate of $\cNn_1(k)$}
\label{est}

We estimate the set $\cNn_1(k)$ in the method of \cite{Da}.

\begin{thm}[{\cite[Theorem 1.1, p.272]{LMO}}]
\label{effectivechebotarev}

There is an absolute, effectively computable constant $A_1>1$ such that for every
finite extension $k_1$ of $\Q$, every finite Galois extension $k_2$ of $k_1$
and every conjugacy class $C$ of $\Gal(k_2/k_1)$,
there is a prime $\mfq$ of $k_1$ which is unramified in $k_2$, for which
$\Frob_{\mfq}=C$,
for which $\N(\mfq)$ is a prime number, and which satisfies the bound
$\N(\mfq)\leq 2 d_{k_2}^{A_1}$.
Here $\Frob_{\mfq}$ is the (arithmetic) Frobenius conjugacy class
at $\mfq$ in $\Gal(k_2/k_1)$.

\end{thm}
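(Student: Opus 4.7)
The plan is to prove this effective Chebotarev-type bound by the analytic method of Lagarias--Montgomery--Odlyzko, combining the Lagarias--Odlyzko effective Chebotarev theorem with the Deuring--Heilbronn repulsion phenomenon. The first step is to introduce the prime-counting function
$$\pi_C(x,k_2/k_1) := \#\{\mfq\subseteq\cO_{k_1} \mid \mfq \text{ unramified in } k_2,\ \Frob_{\mfq}=C,\ \N(\mfq)\leq x\},$$
decompose the indicator of the condition $\Frob_{\mfq}=C$ into irreducible characters of $\Gal(k_2/k_1)$, and apply the explicit formula for each Artin $L$-function $L(s,\chi,k_2/k_1)$. This expresses the difference $\pi_C(x,k_2/k_1)-\tfrac{|C|}{[k_2:k_1]}\mathrm{Li}(x)$ as a weighted sum over the nontrivial zeros of $\zeta_{k_2}(s)$, all of which arise among the zeros of these Artin $L$-functions.

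Next I would bound the zero sum by combining a standard zero-free region for Artin $L$-functions with a log-free zero-density estimate (of Stark, refined by Weiss). Away from any possible exceptional real zero this yields
$$\left|\pi_C(x,k_2/k_1)-\frac{|C|}{[k_2:k_1]}\mathrm{Li}(x)\right| \leq \frac{|C|}{[k_2:k_1]}\mathrm{Li}(x)\cdot\exp\bigl(-c\sqrt{(\log x)/\log d_{k_2}}\bigr)\cdot P(\log d_{k_2})$$
for an absolute polynomial $P$ and a positive absolute constant $c$. Choosing $x$ to be a sufficiently large power of $d_{k_2}$ then makes the error strictly smaller than the main term, so $\pi_C(x,k_2/k_1)>0$. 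To secure the stronger conclusion that some such $\mfq$ has prime norm (equivalently, has residue degree one over $\Q$), one notes that primes of residue degree at least $2$ contribute only $O([k_1:\Q]\sqrt{x}/\log x)$, which is negligible compared with the main term once $x$ exceeds a fixed power of $d_{k_2}$.

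The main obstacle is controlling a potential Siegel zero of $\zeta_{k_2}$ lying very close to $s=1$; ordinary zero-free region arguments would then yield only an ineffective constant. The technical heart of the Lagarias--Montgomery--Odlyzko theorem is the Deuring--Heilbronn phenomenon: the existence of such an exceptional zero forces all other nontrivial zeros to be repelled a quantitative distance away from $\mathrm{Re}(s)=1$, and the resulting enhanced zero-free region is strong enough to furnish the same bound with an effectively computable constant $A_1>1$. Combining the two cases produces the uniform bound $\N(\mfq)\leq 2 d_{k_2}^{A_1}$ claimed in the theorem.
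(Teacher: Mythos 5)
The paper does not prove this statement; it is cited verbatim from Lagarias--Montgomery--Odlyzko \cite{LMO}, and the theorem is used as a black box in the estimates of Section 7. So there is no ``paper's own proof'' to compare against. Your sketch is a reasonable high-level outline of the actual LMO argument: explicit formula applied to Artin $L$-functions, a zero-free region plus zero-density input, and --- crucially --- the Deuring--Heilbronn repulsion phenomenon to handle a possible Siegel zero of $\zeta_{k_2}$ and keep everything effective. The observation that degree-$\geq 2$ primes contribute $O(\sqrt{x})$ and can be discarded is also the right way to ensure $\N(\mfq)$ is a rational prime.

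One small historical inaccuracy: you invoke ``log-free zero-density estimates of Stark, refined by Weiss,'' but Weiss's log-free density theorem for Hecke/Artin $L$-functions (1983) postdates LMO (1979). The original LMO proof uses a smoothed sum with a carefully chosen kernel together with bounds on the number of zeros of $\zeta_{k_2}$ in small regions near $s=1$, rather than a log-free density estimate in the modern sense. The Deuring--Heilbronn repulsion step you describe is indeed the technical heart, and your exposition of why it restores effectivity is correct. For the purposes of this paper, however, none of this is reproduced --- the theorem is simply quoted, and the relevant takeaway is the existence of the absolute effective constant $A_1$ feeding into Proposition \ref{boundofS} and Theorem \ref{boundofN1k}.
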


\begin{prop}[{\cite[Proposition 4.2]{Da}}]
\label{boundofS}

Let $A_1$ be the constant in Theorem \ref{effectivechebotarev}.
Then we can take $\cSn$ so that every prime $\mfq\in\cSn$
satisfies $\N(\mfq)\leq 2d_k^{A_1 h_k}$.

\end{prop}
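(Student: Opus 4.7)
The plan is to apply Theorem \ref{effectivechebotarev} to the extension $H_k/\Q$, where $H_k$ denotes the Hilbert class field of $k$. Since $k/\Q$ is Galois (as assumed in the theorems in which this proposition will be used), so is $H_k/\Q$; write $G=\Gal(H_k/\Q)$ and $N=\Gal(H_k/k)$, and identify $N\cong Cl_k$ via the Artin reciprocity map. Because $N$ is abelian, the conjugation action of $G$ on $N$ factors through $H=G/N=\Gal(k/\Q)$, so the conjugacy classes of $G$ contained in $N$ are exactly the $H$-orbits in $Cl_k$. For each such orbit $O$, denote by $C_O\subseteq G$ the corresponding conjugacy class and apply Theorem \ref{effectivechebotarev} with $k_1=\Q$, $k_2=H_k$ to obtain a rational prime $q_O$ that is unramified in $H_k$, satisfies $\Frob_{q_O}=C_O$, and obeys $q_O\leq 2\, d_{H_k}^{A_1}$.

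Next I would bound $d_{H_k}$. Since $H_k/k$ is unramified, the tower formula for discriminants yields $d_{H_k}=d_k^{[H_k:k]}=d_k^{h_k}$, and so $q_O\leq 2\, d_k^{A_1 h_k}$. Because $\Frob_{q_O}$ lies in $N$, its image in $H$ is trivial, whence $q_O$ splits completely in $k$; thus $q_O\in\cMn$, and every prime $\mfq$ of $k$ above $q_O$ belongs to $\cNn$ with $\N(\mfq)=q_O\leq 2\, d_k^{A_1 h_k}$. The standard compatibility between Frobenius and Artin reciprocity gives $[\mfq]\leftrightarrow\Frob_{\mfq}\in N$, and as $\mfq$ ranges over the primes of $k$ above $q_O$, the Frobenius elements $\Frob_{\mfq}$ sweep out the entire $H$-orbit $O$.

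I would then take $\cSn$ to be the union, over the $H$-orbits $O\subseteq Cl_k$, of the sets of primes of $k$ lying above the chosen $q_O$. This is a finite nonempty subset of $\cNn$ whose set of ideal classes exhausts $Cl_k$, so it certainly generates the ideal class group, and every element satisfies the required bound. The main subtlety is that Chebotarev applied to the (possibly non-abelian) group $G$ only resolves conjugacy classes, i.e., $H$-orbits in $Cl_k$ rather than single elements; forming $\cSn$ from \emph{all} primes above each $q_O$ is precisely what upgrades this orbit-level information into a generating set for $Cl_k$, so that a single norm bound $2\, d_k^{A_1 h_k}$ works uniformly across $\cSn$.
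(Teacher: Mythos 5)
Your proof is correct and follows the standard argument underlying \cite[Proposition 4.2]{Da} (the paper cites David's result and does not reproduce a proof): apply Theorem \ref{effectivechebotarev} to $H_k/\Q$, observe that the conjugacy classes of $\Gal(H_k/\Q)$ lying in the abelian normal subgroup $\Gal(H_k/k)\cong Cl_k$ are exactly the $\Gal(k/\Q)$-orbits in $Cl_k$, use $|d_{H_k}|=|d_k|^{h_k}$ from the conductor-discriminant tower formula and the fact that $H_k/k$ is unramified, and note that a Frobenius lying in $\Gal(H_k/k)$ forces complete splitting in $k$, so the resulting primes of $k$ above each $q_O$ lie in $\cNn$ and their classes cover the whole orbit. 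You rightly record that this uses $k/\Q$ Galois (so that $H_k/\Q$ is Galois and ``degree one over a split prime'' gives membership in $\cNn$), which is the standing hypothesis in the part of the paper where $\cSn$ is actually used, matching David's hypotheses as well.
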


For a place $v$ of $k$ and an element $\alpha\in k$,
define $||\alpha||_v$ as follows.

\begin{itemize}
\item
If $v$ is finite, let $\mfq$ be the prime of $k$
corresponding to $v$,
%let
%$\val_{\mfq}:k^{\times}\twoheadrightarrow\Z$
%be the additive valuation associated to $\mfq$,
and let
$||\alpha||_v:=\N(\mfq)^{-\ord_{\mfq}(\alpha)}$
where $\ord_{\mfq}(\alpha)$
is the order of $\alpha$ at $\mfq$.
Here we put $||\alpha||_v:=0$ if $\alpha=0$.
\item
If $v$ is real, let $\tau:k\hookrightarrow \R$ be the embedding
corresponding to $v$,
and let $||\alpha||_v:=|\tau(\alpha)|$.
\item
If $v$ is complex, let $\tau:k\hookrightarrow \C$ be one of the embeddings
corresponding to $v$,
and let $||\alpha||_v:=|\tau(\alpha)|^2$.
\end{itemize}
For an element $\alpha\in k$, let $\rmH(\alpha)$
denote the absolute height of $\alpha$ defined by
$$\rmH(\alpha):=\left(\prod_v\max(1,||\alpha||_v)\right)^{\frac{1}{n_k}},$$
where $v$ runs through all places of $k$.
We know that there is a positive constant $\delta_k$, depending only on $k$,
such that for every non-zero element $\alpha\in k$ that is not a root of unity
we have $\log\rmH(\alpha)\geq\delta_k/n_k$
(\cf \cite[p.70]{BG}).
Fix such a constant $\delta_k$.
Define the constant
$C_1(k):=\frac{r_k^{1+r_k}\delta_k^{1-r_k}}{2}$.

\begin{prop}
\label{boundofgenerator}

Let $\mfq$ be a prime of $k$.
Then there is an element $\alpha'_{\mfq}\in\cO_k\setminus\{0\}$
which satisfies
$\mfq^{h_k}=\alpha'_{\mfq}\cO_k$
and
$\rmH(\alpha'_{\mfq})\leq|\Norm_{k/\Q}(\alpha'_{\mfq})|
^{\frac{1}{n_k}}\exp(C_1(k)R_k)$.

\end{prop}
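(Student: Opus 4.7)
The plan is to start from any generator of $\mfq^{h_k}$ and then modify it by a unit chosen so that its archimedean absolute values are as balanced as possible. Because $\mfq^{h_k}$ is principal by definition of $h_k$, I first pick any $\alpha_0\in\cO_k\setminus\{0\}$ with $\mfq^{h_k}=\alpha_0\cO_k$. Every other generator has the form $u\alpha_0$ with $u\in\cO_k^{\times}$, and $|\Norm_{k/\Q}(u\alpha_0)|=\N(\mfq)^{h_k}$ is independent of $u$. So the right-hand side of the claimed inequality is an invariant of $\mfq$, and the task becomes to produce $u\in\cO_k^{\times}$ such that $\alpha'_\mfq:=u\alpha_0$ satisfies the advertised height bound.

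Next I would reduce the problem to an archimedean question. Since $u\alpha_0\in\cO_k$ we have $||u\alpha_0||_v\leq 1$ at every finite place $v$, hence $\max(1,||u\alpha_0||_v)=1$ there, and the height is supported on the archimedean places:
$$n_k\log\rmH(u\alpha_0)=\sum_{v\mid\infty}\max\bigl(0,\log||u\alpha_0||_v\bigr).$$
Combined with the product formula $\sum_{v\mid\infty}\log||u\alpha_0||_v=\log|\Norm_{k/\Q}(\alpha_0)|$, the desired estimate reduces to producing $u$ such that the vector $\bigl(\log||u\alpha_0||_v\bigr)_{v\mid\infty}$ lies close, in the $\ell^{\infty}$-sense, to the balanced target whose $v$-coordinate equals its proportional share $\tfrac{n_v}{n_k}\log|\Norm_{k/\Q}(\alpha_0)|$ of the fixed sum; any sup-norm deviation of size at most a harmless multiple of $C_1(k)R_k$ then gives the claim after exponentiating.

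The core step is a Minkowski reduction of the logarithmic unit lattice. Under the map $u\mapsto(\log||u||_v)_{v\mid\infty}$, the group $\cO_k^{\times}/\mathrm{tors}$ embeds as a lattice $\Lambda$ of rank $r_k$ in the hyperplane $\sum_v x_v=0$, with covolume controlled by $R_k$. I would fix a fundamental system of units $u_1,\ldots,u_{r_k}$ and consider the matrix $M=(\log||u_i||_{v_j})$. Two ingredients control $M$: (i) since no $u_j$ is a root of unity, $\log\rmH(u_j)\geq\delta_k/n_k$ forces the sup-norm of each row of $M$ to be bounded below by $\delta_k/n_k$; (ii) every $r_k\times r_k$ minor of $M$ has absolute value $R_k$ up to sign. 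Starting from the initial point $\bigl(\log||\alpha_0||_v\bigr)_{v\mid\infty}$ and rounding modulo $\Lambda$ in a Minkowski-reduced basis then produces a unit $u$ for which $(\log||u\alpha_0||_v)_v$ lies within the required distance of the balanced target, and the resulting $\alpha'_\mfq=u\alpha_0$ satisfies the inequality.

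The main obstacle will be extracting the precise shape $C_1(k)=\tfrac12\,r_k^{1+r_k}\delta_k^{1-r_k}$ from the above reduction. The factor $\delta_k^{1-r_k}$ arises when Cramer's rule is applied to $M$: each coefficient expressing an arbitrary lattice vector in the reduced basis is a ratio of an $(r_k-1)\times(r_k-1)$ minor (bounded using the $\delta_k$-lower bound on row sup-norms) to $R_k$. The factor $r_k^{1+r_k}$ comes from the combinatorial price of Minkowski reduction, which relates the sup-norm of a reduced basis to its successive minima. Keeping simultaneous track of both inputs, in the spirit of the estimates in \cite{Da}, is the technical heart of the argument; once those bounds are in place, the rest is linear-algebraic bookkeeping.
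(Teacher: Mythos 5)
The paper's own proof is essentially a one-line citation: take any generator $\gamma$ of $\mfq^{h_k}$, apply \cite[Lemme 3]{Da} to produce a unit $u\in\cO_k^{\times}$ with $\rmH(u\gamma)\leq|\Norm_{k/\Q}(\gamma)|^{1/n_k}\exp(C_1(k)R_k)$, set $\alpha'_{\mfq}=u\gamma$, and observe $|\Norm_{k/\Q}(u\gamma)|=|\Norm_{k/\Q}(\gamma)|$ because $\Norm_{k/\Q}(u)\in\Z^{\times}=\{\pm 1\}$. You take a genuinely different route: rather than invoking David's lemma as a black box, you sketch a direct proof of it. Your preparatory steps are all correct --- the invariance of $|\Norm_{k/\Q}(u\alpha_0)|$ under $u$, the reduction of the height to archimedean places for integral $\alpha$, the product-formula identity $\sum_{v\mid\infty}\log||u\alpha_0||_v=\log|\Norm_{k/\Q}(\alpha_0)|$, and the idea of balancing the archimedean log vector against the unit lattice --- and they are indeed the mechanism underlying David's Lemme 3. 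The two quantitative inputs you name (the Lehmer-type bound $\log\rmH(u_j)\geq\delta_k/n_k$ and the identification of the $r_k\times r_k$ minors of the logarithmic unit matrix with $\pm R_k$) are also the right ones.

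However, the proposal stops exactly where the content of the proposition lives. The statement is a claim about a \emph{specific} constant $C_1(k)=\tfrac12 r_k^{1+r_k}\delta_k^{1-r_k}$, and your final paragraph only gestures at how $\delta_k^{1-r_k}$ and $r_k^{1+r_k}$ ``should'' emerge from a Cramer-plus-Minkowski argument without carrying out the estimate. What must actually be shown is that one can choose $u$ so that the sup-norm deviation of $(\log||u\alpha_0||_v)_v$ from the balanced target is small enough that, after summing over the $r_k+1$ archimedean places and dividing by $n_k$, one lands under $C_1(k)R_k$; none of that bookkeeping is done. As written this is a correct high-level outline of the argument that the paper outsources to \cite{Da}, but it is not yet a proof of the stated inequality. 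The shortest correct route here is the paper's: cite \cite[Lemme 3]{Da} and do the trivial norm computation.
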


\begin{proof}

%It follows directly from \cite[Lemme 3]{Da}.
%
Take an element $\gamma\in\cO_k\setminus\{0\}$
which satisfies
$\mfq^{h_k}=\gamma\cO_k$.
Then, by \cite[Lemme 3]{Da}, there is an element $u\in\cO_k^{\times}$
satisfying
$\rmH(u\gamma)\leq|\Norm_{k/\Q}(\gamma)|
^{\frac{1}{n_k}}\exp(C_1(k)R_k)$.
If we put $\alpha'_{\mfq}=u\gamma$, then
$\mfq^{h_k}=\alpha'_{\mfq}\cO_k$
and
$\rmH(\alpha'_{\mfq})\leq|\Norm_{k/\Q}(u^{-1}\alpha'_{\mfq})|
^{\frac{1}{n_k}}\exp(C_1(k)R_k)
=|\Norm_{k/\Q}(\alpha'_{\mfq})|^{\frac{1}{n_k}}\exp(C_1(k)R_k)$.
The last equality holds because 
$\Norm_{k/\Q}(u^{-1})\in\Z^{\times}=\{\pm 1\}$.

\end{proof}

Define the constant
$C_2(k):=\exp(24n_k C_1(k)R_k)$.

\begin{prop}
\label{boundofalphaepsilon}

Under the situation in Proposition \ref{boundofgenerator}, we have
$|(\alpha'_{\mfq})^{\varepsilon}|\leq\N(\mfq)^{24h_k}C_2(k)$
for any $\varepsilon\in\cE(k)$. %and any $\tau\in\Gal(k/\Q)$.

\end{prop}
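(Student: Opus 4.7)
The plan is to bound $|(\alpha'_{\mfq})^{\varepsilon}|$ by combining a crude archimedean estimate on each Galois conjugate of $\alpha'_{\mfq}$ with the height bound supplied by Proposition \ref{boundofgenerator}. I assume throughout, as is implicit from the definition of $\cE(k)$ in Section \ref{charI}, that $k$ is Galois over $\Q$, so that the elements of $\Gal(k/\Q)$ composed with the fixed embedding $k\hookrightarrow\C$ enumerate all $n_k$ complex embeddings of $k$.

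First, I would write $\varepsilon=\sum_{\sigma}a_{\sigma}\sigma$ with $a_{\sigma}\in\{0,8,12,16,24\}$, so that $|(\alpha'_{\mfq})^{\varepsilon}|=\prod_{\sigma}|\sigma(\alpha'_{\mfq})|^{a_{\sigma}}$. Since $0\leq a_{\sigma}\leq 24$ for every $\sigma$, a case split on whether $|\sigma(\alpha'_{\mfq})|\leq 1$ or $|\sigma(\alpha'_{\mfq})|\geq 1$ immediately gives the trivial inequality $|\sigma(\alpha'_{\mfq})|^{a_{\sigma}}\leq\max(1,|\sigma(\alpha'_{\mfq})|)^{24}$, and hence $|(\alpha'_{\mfq})^{\varepsilon}|\leq\left(\prod_{\sigma}\max(1,|\sigma(\alpha'_{\mfq})|)\right)^{24}$.

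Next, I would identify the product over $\sigma$ with $\rmH(\alpha'_{\mfq})^{n_k}$. For each real place $v$ with embedding $\tau$ one has $\max(1,|\tau(\alpha'_{\mfq})|)=\max(1,||\alpha'_{\mfq}||_v)$; for each complex place $v$ the two conjugate embeddings together contribute $\max(1,|\tau(\alpha'_{\mfq})|)^{2}=\max(1,||\alpha'_{\mfq}||_v)$. Since $\alpha'_{\mfq}\in\cO_k$, every finite place contributes a factor of $1$ to the height, so $\prod_{\sigma}\max(1,|\sigma(\alpha'_{\mfq})|)=\rmH(\alpha'_{\mfq})^{n_k}$. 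Plugging in Proposition \ref{boundofgenerator} together with the identity $|\Norm_{k/\Q}(\alpha'_{\mfq})|=\N(\mfq)^{h_k}$ (from $\mfq^{h_k}=\alpha'_{\mfq}\cO_k$) then yields $|(\alpha'_{\mfq})^{\varepsilon}|\leq\rmH(\alpha'_{\mfq})^{24n_k}\leq\N(\mfq)^{24h_k}\exp(24n_k C_1(k)R_k)=\N(\mfq)^{24h_k}C_2(k)$, as required.

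There is no real obstacle here; the estimate is essentially a textbook height argument. The one point requiring attention is the bookkeeping that identifies $\prod_{\sigma}\max(1,|\sigma(\alpha'_{\mfq})|)$ with $\rmH(\alpha'_{\mfq})^{n_k}$, namely that a complex place is hit by a pair of complex-conjugate embeddings and that the normalisation $||\cdot||_v$ carries a square at such a place, which together make the exponents match on the nose.
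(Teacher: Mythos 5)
Your argument is exactly the one in the paper: bound each exponent $a_\sigma$ by $24$ via the crude $\max(1,\cdot)^{24}$ estimate, identify $\prod_\sigma\max(1,|\sigma(\alpha'_\mfq)|)$ with $\rmH(\alpha'_\mfq)^{n_k}$ using that $\alpha'_\mfq\in\cO_k$ kills the finite places, and then invoke Proposition \ref{boundofgenerator} together with $|\Norm_{k/\Q}(\alpha'_\mfq)|=\N(\mfq)^{h_k}$. The extra care you take with the real-versus-complex place bookkeeping is a correct elaboration of what the paper states in one line.
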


\begin{proof}

Let
$\varepsilon=\sum_{\sigma\in\Gal(k/\Q)}a_{\sigma}\sigma$.
Then
$$|(\alpha'_{\mfq})^{\varepsilon}|
=\left|\prod_{\sigma\in\Gal(k/\Q)}(\alpha'_{\mfq})^{a_{\sigma}\sigma}\right|
\leq\left(\prod_{\sigma\in\Gal(k/\Q)}\max(1,|(\alpha'_{\mfq})^{\sigma}|)\right)^{24}
=\prod_{v|\infty}\max(1,||\alpha'_{\mfq}||_v)^{24}$$
$$=H(\alpha'_{\mfq})^{24 n_k}
\leq |\Norm_{k/\Q}(\alpha'_{\mfq})|^{24}\exp(24 n_k C_1(k) R_k)
=\N(\mfq)^{24 h_k}C_2(k).$$
Note that the third equality holds because
$\alpha'_{\mfq}\in\cO_k\setminus\{0\}$.

\end{proof}

For $a>0$, define the constant
$C(k,a):=(a^{24h_k}C_2(k)+a^{12h_k})^{2n_k}$.

\begin{prop}
\label{boundofnorm}

Under the situation in Proposition \ref{boundofgenerator}, we have
$|\Norm_{k(\beta_{\mfq})/\Q}((\alpha'_{\mfq})^{\varepsilon}-\beta_{\mfq}^{24 h_k})|
\leq C(k,\N(\mfq))$
for any $\varepsilon\in\cE(k)$ and any $\beta_{\mfq}\in\cFR(\N(\mfq))$.

\end{prop}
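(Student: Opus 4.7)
The plan is to expand the norm as a product over the complex embeddings of $k(\beta_{\mfq})$ and bound each factor by the triangle inequality, then combine with Proposition \ref{boundofalphaepsilon} and the absolute-value property of elements of $\cFR(\N(\mfq))$.

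More precisely, I would first observe that $[k(\beta_{\mfq}):\Q]\leq 2n_k$, since $\beta_{\mfq}$ satisfies a monic quadratic over $\Z$. Thus there are at most $2n_k$ embeddings $\tilde\tau:k(\beta_{\mfq})\hookrightarrow\C$, and by definition
\[
\bigl|\Norm_{k(\beta_{\mfq})/\Q}\bigl((\alpha'_{\mfq})^{\varepsilon}-\beta_{\mfq}^{24h_k}\bigr)\bigr|
=\prod_{\tilde\tau}\bigl|\tilde\tau\bigl((\alpha'_{\mfq})^{\varepsilon}-\beta_{\mfq}^{24h_k}\bigr)\bigr|.
\]
Each embedding $\tilde\tau$ restricts to an embedding $\tau:k\hookrightarrow\C$, and by the triangle inequality each factor is at most $|\tau((\alpha'_{\mfq})^{\varepsilon})|+|\tilde\tau(\beta_{\mfq})|^{24h_k}$.

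Next I would bound the two summands uniformly in $\tilde\tau$. For the second, any $\Q$-conjugate of $\beta_{\mfq}\in\cFR(\N(\mfq))$ is again a root of the same quadratic $x^{2}+ax+\N(\mfq)$, so $|\tilde\tau(\beta_{\mfq})|=\sqrt{\N(\mfq)}$ and hence $|\tilde\tau(\beta_{\mfq})|^{24h_k}=\N(\mfq)^{12h_k}$. For the first, since $k/\Q$ is Galois and $\varepsilon=\sum_{\sigma\in\Gal(k/\Q)}a_{\sigma}\sigma$, the element $\tau((\alpha'_{\mfq})^{\varepsilon})$ is a product of the same shape with the $a_{\sigma}$ permuted; applying the very same chain of inequalities as in the proof of Proposition \ref{boundofalphaepsilon} yields $|\tau((\alpha'_{\mfq})^{\varepsilon})|\leq\rmH(\alpha'_{\mfq})^{24n_k}\leq\N(\mfq)^{24h_k}C_2(k)$.

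Combining these, each of the at most $2n_k$ factors is bounded by $\N(\mfq)^{24h_k}C_2(k)+\N(\mfq)^{12h_k}$, so the product is at most $\bigl(\N(\mfq)^{24h_k}C_2(k)+\N(\mfq)^{12h_k}\bigr)^{2n_k}=C(k,\N(\mfq))$, which is the claim. I do not foresee any real obstacle here; the only point that needs a little care is that the bound of Proposition \ref{boundofalphaepsilon} applies equally to every archimedean conjugate of $(\alpha'_{\mfq})^{\varepsilon}$, which is immediate from the Galois invariance of the statement once one notes that the Galois action merely permutes the exponents $a_{\sigma}$.
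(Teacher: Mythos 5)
Your proof is correct and is essentially the same as the paper's: both expand the norm as a product over the (at most $2n_k$) conjugates, apply the triangle inequality factor by factor, bound $|\beta_{\mfq}^{\tau}|=\sqrt{\N(\mfq)}$, and bound the $\alpha$-term by noting that a Galois conjugate of $(\alpha'_{\mfq})^{\varepsilon}$ is $(\alpha'_{\mfq})^{\varepsilon\tau}$ with $\varepsilon\tau\in\cE(k)$ (the exponents merely permuted), so Proposition \ref{boundofalphaepsilon} applies. The paper phrases this via $\Gal(k(\beta_{\mfq})/\Q)$ rather than embeddings, but that is the same thing here.
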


\begin{proof}

For any $\tau\in\Gal(k(\beta_{\mfq})/\Q)$,
we have
$|((\alpha'_{\mfq})^{\varepsilon}-\beta_{\mfq}^{24h_k})^{\tau}|
\leq |(\alpha'_{\mfq})^{\varepsilon\tau}|+|\beta_{\mfq}^{24 h_k\tau}|
\leq \N(\mfq)^{24 h_k}C_2(k)+\N(\mfq)^{12 h_k}$.
Then
$|\Norm_{k(\beta_{\mfq})/\Q}((\alpha'_{\mfq})^{\varepsilon}-\beta_{\mfq}^{24 h_k})|
=\prod_{\tau\in\Gal(k(\beta_{\mfq})/\Q)}
|((\alpha'_{\mfq})^{\varepsilon}-\beta_{\mfq}^{24h_k})^{\tau}|
\leq (\N(\mfq)^{24 h_k} C_2(k)+\N(\mfq)^{12 h_k})^{2 n_k}
=C(k,\N(\mfq))$.

\end{proof}

From now to the end of this section, take $\cSn$ as in Proposition \ref{boundofS},
and take $\alpha_{\mfq}$ to be $\alpha'_{\mfq}$ in Proposition \ref{boundofgenerator} for any $\mfq\in\cSn$.

\begin{prop}
\label{boundofM2k}

For any $m\in\cMn_2(k)$, we have $|m|\leq C(k,2 d_k^{A_1 h_k})$.

\end{prop}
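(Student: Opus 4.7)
The plan is to chain together Proposition \ref{boundofS}, Proposition \ref{boundofnorm}, and a simple monotonicity observation about the function $a \mapsto C(k,a)$.

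First I would unpack the definition: any element $m \in \cMn_2(k)$ has the form
$m = \Norm_{k(\beta_{\mfq})/\Q}(\alpha_{\mfq}^{\varepsilon_0} - \beta_{\mfq}^{24 h_k})$
for some triple $(\mfq, \varepsilon_0, \beta_{\mfq}) \in \cMn_1(k)$, i.e.\ with $\mfq \in \cSn$, $\varepsilon_0 \in \cE(k)$, and $\beta_{\mfq} \in \cFR(\N(\mfq))$. Since we have arranged (by the conventions adopted just before the statement) that $\cSn$ is the set supplied by Proposition \ref{boundofS} and that $\alpha_{\mfq} = \alpha'_{\mfq}$ is the generator produced by Proposition \ref{boundofgenerator}, the triple $(\mfq, \varepsilon_0, \beta_{\mfq})$ falls directly under the hypotheses of Proposition \ref{boundofnorm}.

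Next I would apply Proposition \ref{boundofnorm} verbatim to conclude $|m| \leq C(k, \N(\mfq))$. Then I would invoke Proposition \ref{boundofS}, which gives $\N(\mfq) \leq 2 d_k^{A_1 h_k}$.

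The only remaining step is to observe monotonicity of $C(k,\cdot)$. Recall $C(k,a) = (a^{24 h_k} C_2(k) + a^{12 h_k})^{2 n_k}$; since $h_k, n_k \geq 1$ and $C_2(k) > 0$, the inner expression is strictly increasing in $a > 0$, and raising to the positive power $2 n_k$ preserves the inequality. Hence $C(k, \N(\mfq)) \leq C(k, 2 d_k^{A_1 h_k})$, which combined with the previous bound yields $|m| \leq C(k, 2 d_k^{A_1 h_k})$.

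There is no real obstacle here; the proposition is essentially a bookkeeping corollary, assembling the effective bound on $\N(\mfq)$ from Theorem \ref{effectivechebotarev}/Proposition \ref{boundofS} with the height-theoretic bound of Proposition \ref{boundofnorm}. The only point worth being explicit about is the monotonicity of $C(k,a)$, which is why I would state it as a one-line observation rather than leave it implicit.
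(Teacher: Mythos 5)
Your proof is correct and follows the same route as the paper: unpack the definition of $\cMn_2(k)$, apply Proposition \ref{boundofnorm} to get $|m| \leq C(k, \N(\mfq))$, and then use Proposition \ref{boundofS} together with the monotonicity of $a \mapsto C(k,a)$. The paper simply leaves the monotonicity step implicit; making it explicit, as you do, is fine and does not change the argument.
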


\begin{proof}

We have
$m=\Norm_{k(\beta_{\mfq})/\Q}(\alpha_{\mfq}^{\varepsilon}-\beta_{\mfq}^{24 h_k})$
for some elements
$\mfq\in \cSn$, $\varepsilon\in\cE(k)$ and
$\beta_{\mfq}\in\cFR(\N(\mfq))$.
Then we obtain
$|m|\leq C(k,\N(\mfq))\leq C(k,2 d_k^{A_1 h_k})$
by Propositions \ref{boundofS} and \ref{boundofnorm}.

\end{proof}

\begin{thm}
\label{boundofN1k}

For any $p\in\cNn_1(k)$, we have $p\leq C(k,2 d_k^{A_1 h_k})$.

\end{thm}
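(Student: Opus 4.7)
The plan is to decompose $\cNn_1(k) = \cNn_0(k)\cup\cTn(k)\cup\Ram(k)$ into its three defining pieces and to bound $p$ separately in each piece, then verify at the end that every bound produced is dominated by $C(k, 2d_k^{A_1 h_k})$. The theorem is really a packaging result: all three subsets of $\cNn_1(k)$ have already been controlled individually by the preceding propositions, so the job is to notice that the largest of these bounds is exactly what appears on the right.

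First I would take $p\in\cNn_0(k)$. By definition there exists $m\in\cMn_2(k)$ with $p\mid m$, whence $p\leq |m|$, and Proposition \ref{boundofM2k} gives immediately $|m|\leq C(k,2d_k^{A_1h_k})$. This case requires no further work.

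Next I would take $p\in\cTn(k)$. Either $p\in\{2,3\}$, or $p$ is divisible by some prime $\mfq'\in\cSn$. In the second case $\mfq'$ lies above a rational prime that splits completely in $k$, so $\N(\mfq')=p$ is prime; Proposition \ref{boundofS} then yields $p=\N(\mfq')\leq 2d_k^{A_1 h_k}$. In the first case $p\leq 3\leq 2d_k^{A_1h_k}$ trivially. Finally, for $p\in\Ram(k)$ one has $p\mid d_k$, so $p\leq |d_k|\leq 2d_k^{A_1h_k}$ (using $A_1>1$ and $h_k\geq 1$). All three cases thus give $p\leq \max\bigl(C(k,2d_k^{A_1h_k}),\,2d_k^{A_1h_k}\bigr)$.

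The only remaining point—and the one and only (very mild) obstacle—is the elementary verification that the naive bound $2d_k^{A_1h_k}$ arising in the second and third cases is itself $\leq C(k,2d_k^{A_1h_k})$. Writing $a:=2d_k^{A_1h_k}\geq 2$ and recalling $C_2(k)=\exp(24n_kC_1(k)R_k)\geq 1$, one has
\[
C(k,a)=\bigl(a^{24h_k}C_2(k)+a^{12h_k}\bigr)^{2n_k}\geq a^{48n_kh_k}\geq a,
\]
since $n_k,h_k\geq 1$. Hence every prime in $\cNn_1(k)$ is bounded by $C(k,2d_k^{A_1h_k})$, completing the proof. The argument is essentially a diagram-chase through the definitions; no new analytic input beyond Propositions \ref{boundofS} and \ref{boundofM2k} is required.
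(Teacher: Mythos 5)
Your proof is correct and follows essentially the same approach as the paper: decompose $\cNn_1(k)$ into $\cNn_0(k)\cup\cTn(k)\cup\Ram(k)$, bound each piece using Propositions \ref{boundofM2k} and \ref{boundofS} together with the definition of $\Ram(k)$, and observe that all bounds are dominated by $C(k,2d_k^{A_1 h_k})$. Your explicit verification that $2d_k^{A_1h_k}\leq C(k,2d_k^{A_1h_k})$ is a small but worthwhile addition that the paper leaves implicit.
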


\begin{proof}

Let $p\in\cNn_1(k)$.
If $p\in\cNn_0(k)$, then $p\leq C(k,2 d_k^{A_1 h_k})$.
If $p\in\cTn(k)$, then $p\leq \max(3,2 d_k^{A_1 h_k})$.
If $p\in\Ram(k)$, then $p\leq d_k$.
Therefore we conclude $p\leq C(k,2 d_k^{A_1 h_k})$.

\end{proof}

\def\bibname{References}

(Keisuke Arai)
Department of Mathematics, School of Engineering,
Tokyo Denki University,
5 Senju Asahi-cho, Adachi-ku, Tokyo 120-8551 Japan

\textit{E-mail address}: \texttt{araik@mail.dendai.ac.jp}

\end{document}